\documentclass[12pt,leqno,fleqn]{amsart}  
\usepackage{tikz}
\usepackage{amsmath,amstext,amsthm,amssymb,amsxtra}
\usepackage[top=1.5in, bottom=1.5in, left=1.25in, right=1.25in]	{geometry}
\usepackage{lmodern}

\usepackage{mathtools}
\mathtoolsset{showonlyrefs,showmanualtags}

\usepackage{hyperref} 
\hypersetup{
    colorlinks=true,       
    linkcolor=blue,          
    citecolor=magenta,        
    filecolor=magenta,      
    urlcolor=cyan           
}

\usepackage[msc-links]{amsrefs}


\newtheorem{theorem}{Theorem}[section]
\newtheorem{proposition}{Proposition}[section]
\newtheorem{lemma}{Lemma}[section]
\newtheorem{corollary}{Corollary}[section]

\newtheorem{definition}{Definition}[section]

\newtheorem*{procedure}{Procedure}

\newtheorem{priorTheorem}{Theorem}

\def\BO{{\rm BO}}

\def\OSC{{\rm OSC}}
\def\SUP{{\rm SUP}}
\def\INF{{\rm INF}}
\def\supp{{\rm supp\,}}

\def\essinf{{\rm essinf\, }}

\def\ZU{\ensuremath{\mathfrak U}}

\def\ZM{\ensuremath{\mathfrak M}}
\def\MM{\ensuremath{\mathcal M}}
\def\ZB{\ensuremath{\mathfrak B}}
\def\ZA{\ensuremath{\mathcal A}}

\def\ZI{\ensuremath{\mathbb I}}

\def\ZK{\ensuremath{\mathcal K}}
\def\ZR{\ensuremath{\mathbb R}}
\def\ZT{\ensuremath{\mathbb T}}

\def\ZL{\ensuremath{\mathcal L}}

\def\ch{{\rm ch}}
\def\ZG{{\mathcal G\,}}
\def\ZC{{\mathcal {C}}}

\numberwithin{equation}{section}

\newcommand {\e }[1]{\eqref{#1}}
\newcommand {\lem }[1]{Lemma \ref{#1}}

\newcommand {\cor }[1]{Corollary \ref{#1}}

\newcommand {\trm }[1]{Theorem \ref{#1}}

\title[] {On good-$\lambda$ inequalities for couples of measurable functions}
\author{Grigori A. Karagulyan}
\address{Faculty of Mathematics and Mechanics, Yerevan State
University, Alex Manoogian, 1, 0025, Yerevan, Armenia} 
\email{g.karagulyan@ysu.am}

\thanks{Research was supported by the Science Committee of Armenia, grant 18T-1A081 }

\subjclass[2010]{42C05, 42C10, 42C20}
\keywords{good-$\lambda$ inequality, Calder\'on-Zygmund operator, maximal function, sharp function, ball-basis}
	\begin{document}
\begin{abstract}
	We give a domination condition implying good-$\lambda$ and exponential inequalities for couples of measurable functions.  Those inequalities recover several classical and new estimations involving some operators  in Harminic Analysis. Among other corollaries we prove a new exponential estimate for Carleson operators. The main results of the paper are considered in a general setting, namely, on abstract measure spaces equipped with a ball-basis. 
\end{abstract}

	\maketitle  
\section{Introduction}
A classical problem in the theory of singular operators is the control of a given operator by a maximal type operator. A typical result in this study is the Coifman-Fefferman \cite{CoFe} well-known estimate of a Calder\'on-Zygmund operator by the Hardy-Littlewood maximal function. 
\begin{priorTheorem}[Coifman-Fefferman, \cite{CoFe}]
	Let $T$ be a Calder\'on-Zygmund operator on $\ZR^n$ and $M$ be the maximal operator. Then for any weight $w$ satisfying the Mackenhaupt $A_\infty$ condition it holds the inequality
	\begin{equation}\label{3}
	\|T^*f\|_{L^p(w)}\le c\|Mf\|_{L^p(w)},
	\end{equation} 
	where $0<p<\infty$ and $c>0$ is a constant depending on $n$, $p$ and $w$.
\end{priorTheorem}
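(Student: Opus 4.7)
The plan is to derive \e{3} from a good-$\lambda$ inequality of the form
\begin{equation*}
w\big(\{T^*f>2\lambda,\ Mf\le \gamma\lambda\}\big)\le C\gamma^{\delta}\, w\big(\{T^*f>\lambda\}\big),
\end{equation*}
valid for every $\lambda>0$ and every sufficiently small $\gamma>0$, where $\delta>0$ is the exponent coming from the $A_\infty$ characterization of $w$. This is the classical Coifman--Fefferman strategy and fits the paper's general scheme.

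First I would treat the unweighted version and show
\begin{equation*}
\big|\{T^*f>2\lambda,\ Mf\le \gamma\lambda\}\big|\le C\gamma\,\big|\{T^*f>\lambda\}\big|.
\end{equation*}
Setting $\Omega_\lambda:=\{T^*f>\lambda\}$ (which is open by lower semicontinuity of $T^*f$), I would apply a Whitney decomposition $\Omega_\lambda=\bigsqcup_j Q_j$ with $\diam Q_j\asymp \dist(Q_j,\Omega_\lambda^c)$, and estimate $|E_j|$ for $E_j:=Q_j\cap\{T^*f>2\lambda,\ Mf\le\gamma\lambda\}$. Assuming $E_j\neq\emptyset$, pick a point $x_j$ near $Q_j$ with $T^*f(x_j)\le\lambda$ and split $f=f_1+f_2$ with $f_1=f\chi_{cQ_j}$ for a suitable dilate. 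The standard kernel estimates of the Calder\'on--Zygmund operator give $|T^*f_2(x)-T^*f_2(x_j)|\lesssim Mf(x)$ for $x\in Q_j$, so on $E_j$ we have $T^*f_2\le\lambda+C\gamma\lambda\le \tfrac32\lambda$, forcing $T^*f_1>\tfrac\lambda2$ there. The weak $(1,1)$ bound for $T^*$ applied to $f_1$, combined with the pointwise bound $Mf\le\gamma\lambda$ which controls $\|f_1\|_1\lesssim \gamma\lambda|Q_j|$, yields $|E_j|\le C\gamma|Q_j|$. Summing over $j$ and using $\sum_j |Q_j|\le |\Omega_\lambda|$ completes the Lebesgue good-$\lambda$ inequality.

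Second, I would upgrade to the weighted inequality using that $w\in A_\infty$ is equivalent to the existence of $\delta,C>0$ with $w(E)/w(Q)\le C(|E|/|Q|)^\delta$ for every cube $Q$ and measurable $E\subset Q$. Applied to each Whitney cube this converts $|E_j|\le C\gamma|Q_j|$ into $w(E_j)\le C\gamma^\delta w(Q_j)$, and summation produces the weighted good-$\lambda$ estimate. Finally, \e{3} follows by the layer-cake formula
\begin{equation*}
\|T^*f\|_{L^p(w)}^p = p\int_0^\infty \lambda^{p-1}w(\{T^*f>\lambda\})\,d\lambda,
\end{equation*}
changing variables $\lambda\mapsto 2\lambda$, applying the good-$\lambda$ inequality, and choosing $\gamma$ so small that $2^p C\gamma^\delta<1$ to absorb the resulting term on the left. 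An a priori finiteness of $\|T^*f\|_{L^p(w)}$, required to justify the absorption when $p<1$ or for unbounded $f$, can be arranged by a standard truncation of $f$ and of the level sets.

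The main obstacle is the geometric localization step on the Whitney cubes: controlling the oscillation of $T^*f_2$ by $Mf$ through the kernel regularity, and matching it with the weak $(1,1)$ estimate applied to the local piece $f_1$, so that the extra factor that appears in $|E_j|\le C\gamma|Q_j|$ is genuinely linear (or at least a positive power) in $\gamma$. Once that gain is isolated, the passage from Lebesgue to weighted measure via $A_\infty$ and the integration via the layer-cake formula are essentially formal.
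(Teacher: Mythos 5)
Your proposal is the classical Coifman--Fefferman argument, and it is essentially correct; note, however, that the paper does not prove this statement at all -- it is quoted from \cite{CoFe}, and the introduction merely records that \e{3} follows from the good-$\lambda$ inequality $w\{|T^*f|>2\lambda,\ Mf\le\gamma\lambda\}\le c\gamma^{\delta}w\{|T^*f|>\lambda\}$, which is exactly the route you take. The paper's own machinery recovers such estimates by a genuinely different path: instead of a Whitney decomposition of the open set $\{T^*f>\lambda\}$ (which has no analogue for a general ball-basis), it verifies an oscillation--domination condition -- the proposition giving \e{5} shows $\OSC_{B,\alpha}(|Tf|)\lesssim\big(\ZL(T)+(1-\alpha)^{-1/r}\|T\|_{L^r\to L^{r,\infty}}\big)\langle f\rangle^*_{B}$ for any $\BO$ operator, and \e{d12} identifies $\langle f\rangle^*_{B}$ with $\INF_{B}(\MM f)$ up to admissible constants -- and then feeds this into \trm{T1} and \cor{C2}, whose proof replaces Whitney cubes by the balanced covering \lem{L3} (density points plus the Besicovitch-type \lem{L1}). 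Your argument buys the concrete Euclidean geometry and yields Theorem A directly; the paper's route buys generality (arbitrary doubling ball-bases, no dyadic structure) and, under strong domination, the stronger exponential decay of \cor{C9} in place of the power $\gamma^{\delta}$.

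One step in your sketch needs repair. From $T^*f(x_j)\le\lambda$ you cannot control $T^*f_2(x_j)$ by subtraction, since $T^*$ is only subadditive and $T^*f_1(x_j)$ is not bounded; so the chain ``$|T^*f_2(x)-T^*f_2(x_j)|\lesssim Mf$ hence $T^*f_2\le\lambda+C\gamma\lambda$ on $E_j$'' does not literally close. The standard fix is to prove directly, by comparing the truncation $T_\varepsilon f_2(x)$ with a truncation $T_{\varepsilon'}f(x_j)$ at a suitably enlarged radius, the pointwise bound $T^*f_2(x)\le T^*f(x_j)+C\,Mf(\xi_j)$ for $x\in Q_j$, where $\xi_j\in E_j$ is a point with $Mf(\xi_j)\le\gamma\lambda$; with this inequality in hand your conclusion on $E_j$ holds, and the rest of your argument -- weak $(1,1)$ applied to $f_1$ with $\|f_1\|_1\lesssim\gamma\lambda|Q_j|$, the $A_\infty$ condition \e{a18} applied on each Whitney cube, and the layer-cake integration with absorption after a truncation guaranteeing a priori finiteness of $\|T^*f\|_{L^p(w)}$ -- is the standard and correct completion.
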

The original proof of this inequality is based on a special technique developed  in the papers of Burkholder-Gundy \cite{BuGu} and Coifman \cite{Co}. Namely, \e{3} can be easily deduced from the inequality 
\begin{equation*}
w\{x\in \ZR^n:\,|T^*f|>2\lambda,\quad |Mf|<\gamma \lambda\}\le c\gamma^\delta w\{|T^*f|>\lambda\},\quad \lambda>0,
\end{equation*}
where $\gamma>0$ is a sufficiently small number, $c$ and $\delta$ are constants. This kind of bounds are known as good-$\lambda$ inequalities and those play significant role in the study of norm estimates of singular operators. Similar estimations of the Hardy-Littlewood maximal function by the sharp maximal function was proved by Fefferman and Stein in \cite{FeSt} (see also \cite{St}, ch. 4).

In the present paper we give a general approach to good-$\lambda$ inequalities. We provide domination conditions, which imply good-$\lambda$ and exponential inequalities for couples of measurable functions.  We shall work in abstract measure spaces equipped with a ball-basis. The concept of ball-basis was introduced in \cite{Kar1}. 
\begin{definition} Let $(X,\ZM,\mu)$ be a measure space. A family of sets $\ZB\subset \ZM$ is said to be a ball-basis if it satisfies the following conditions: 
	\begin{enumerate}
		\item[B1)] $0<\mu(B)<\infty$ for any ball $B\in\ZB$.
		\item[B2)] For any points $x,y\in X$ there exists a ball $B\ni x,y$.
		\item[B3)] If $E\in \ZM$, then for any $\varepsilon>0$ there exists a finite or infinite sequence of balls $B_k$, $k=1,2,\ldots$, such that $\mu(E\bigtriangleup \cup_k B_k)<\varepsilon$.
		\item[B4)] For any $B\in\ZB$ there is a ball $B^*\in\ZB $ (called {\rm hull} of $B$) satisfying the conditions
		\begin{align}
		&\bigcup_{A\in\ZB:\, \mu(A)\le 2\mu(B),\, A\cap B\neq\varnothing}A\subset B^*,\label{h12}\\
		&\qquad\qquad\mu(B^*)\le \ZK\mu(B),\label{h13}
		\end{align}
		where $\ZK$ is a positive constant.
	\end{enumerate}
\end{definition}
One can check that the Euclidean balls (or cubes) in $\ZR^n$ form a ball-basis. Moreover, it was proved in \cite{Kar1} that if the family of metric balls in spaces of homogeneous type satisfies the density condition, then it is a ball-basis too. Other examples of ball-basis are the family of dyadic cubes in $\ZR^n$ and its martingale extensions (see \cite{Kar1} for other details).

Let $(X,\ZM,\mu)$ be a measure space with a ball-basis $\ZB$. Given measurable function $f$ and ball $B\in \ZB$ we denote
	\begin{align}
	&\OSC_{B,\alpha}(f)=\inf_{E\subset B:\, \mu(E)\ge \alpha\mu(B)}\OSC_E(f),\\
	&\INF_{B,\alpha}(f)=\inf_{E\subset B:\, \mu(E)\ge \alpha\mu(B)}\|f\|_{L^\infty(E)},\\
	&\INF_{B}(f)=\essinf_{y\in B}|f(y)|,
	\end{align}
	where $0<\alpha<1$ and
	\begin{equation}
	\OSC_E(f)=\sup_{x,x'\in E}|f(x)-f(x')|.
	\end{equation}
\begin{definition}
	Let $f$ and $g$ be measurable functions.  The function  $f$ is said to be weakly dominated by $g$ if for any $0<\alpha<1$ there exists a number $\beta=c(\alpha)>0$ such that the inequality
	\begin{equation}\label{00}
	\OSC_{B,\alpha}(f)< \beta\cdot\INF_{B,1-\alpha}(g),
	\end{equation}
	holds for every ball $B\in\ZB$. If we have 
	\begin{equation}\label{01}
	\OSC_{B,\alpha}(f)< \beta\cdot\INF_{B}(g)
	\end{equation}
	instead of \e{00}, then we say $f$ is strongly dominated by $g$.
\end{definition}
Clearly relation \e{01} yields \e{00}. We will see below that if the ball-basis $\ZB$ is doubling, then condition \e{00} yields a good-$\lambda$ inequality for couples of measurable functions $f$ and $g$. 
\begin{definition}
	We say that a ball-basis $\ZB$ in a measure space $(X,\ZM,\mu)$ is doubling if there is a constant $\eta>2$ such that for any ball $A\in \ZB$, $\mu(A)<  \mu(X)/2$, one can find a ball $B\supset A$ satisfying
	\begin{equation}\label{h73}
	2\mu(A)\le \mu( B)\le\eta  \cdot \mu(A).
	\end{equation}
\end{definition}
Recall the definition of Muckenhaupt's $A_\infty$-condition in the setting of general ball-bases.
\begin{definition}
Let $(X,\ZM,\mu)$ be a measure space equipped with a ball-basis $\ZB$. We say a positive measure $w$ defined on the $\sigma$-algebra $\ZM$ satisfies $A_\infty$-condition if there are constants $\delta, c>0$ such that 
\begin{equation}\label{a18}
\frac{w(E)}{w(B)}\le \gamma \cdot \left(\frac{\mu(E)}{\mu(B)}\right)^\delta
\end{equation}
for every choice of a ball $B\in\ZB$ and a measurable set $E\subset B$.
\end{definition}
In the sequel constants depending only on parameters $\ZK$ and $\eta$ (if the ball-basis is doubling) will be called admissible constants. The relation $a\lesssim b$ ($a\gtrsim b$) will stand for the inequality $a\le c\cdot b$ ($a\ge c\cdot b$), where $c>0$ is an admissible constant. The following statement is one of the main result of the present paper.
\begin{theorem}\label{T1}
	Let $(X,\ZM,\mu)$ be a measure space with a doubling ball-basis $\ZB$ such that $\mu(X)=\infty$ and let $w$ be an $A_\infty$ measure. If $0<\alpha<1$, $\beta>0$ and measurable functions $f,g$ satisfy \e{00}, then we have the inequality 
	\begin{align}\label{1}
	\mu\{x\in X:\,|f(x)|>2\lambda,\, &|g(x)|\le \lambda/\beta \}\\
	&\lesssim \gamma(1-\alpha)^\delta\mu\{x\in X:\,|f(x)|>\lambda\},\, \lambda>0,
	\end{align}
	where $\gamma$ and $\delta$ are the constants form \e{a18}.
\end{theorem}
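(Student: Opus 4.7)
The plan is to follow the classical Calder\'on-Zygmund good-$\lambda$ strategy in the ball-basis setting. The argument has three stages: a Whitney-type decomposition of the level set $U_\lambda=\{|f|>\lambda\}$ into disjoint balls $\{B_k\}$ with controlled enlargements $\{\tilde B_k\}$; a pointwise inequality on each $\tilde B_k$ extracted from the weak domination \e{00}; and an $A_\infty$ upgrade that converts the resulting $\mu$-bound into the weighted form of \eqref{1}.

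For the decomposition, I fix $\theta\in(0,\alpha)$ (say $\theta=\alpha/2$) and may assume $\mu(U_\lambda)<\infty$, else \eqref{1} is trivial. For $\mu$-a.e.\ $x\in U_\lambda$ the Lebesgue differentiation for ball-bases (established in \cite{Kar1}) yields balls $B\ni x$ with $\mu(B\cap U_\lambda)/\mu(B)$ arbitrarily close to $1$. I grow such a ball through the doubling chain provided by \e{h73} until the density first drops below $\theta$; the stopping is guaranteed to occur since $\mu(X)=\infty$. This gives a ball $B_x\ni x$ with $\mu(B_x\cap U_\lambda)\ge\theta\mu(B_x)$ whose doubling successor $\tilde B_x$ satisfies $\mu(\tilde B_x)\le\eta\mu(B_x)$ and $\mu(\tilde B_x\setminus U_\lambda)>(1-\theta)\mu(\tilde B_x)$. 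A Vitali selection based on the hulls from property B4 extracts a disjoint subfamily $\{B_k\}$ whose hulls cover $U_\lambda$ up to a $\mu$-null set, with paired $\tilde B_k$ inheriting the density condition and $\mu(\tilde B_k)\asymp\mu(B_k)$.

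For the local estimate, I claim
\[
\mu\bigl(\tilde B_k\cap\{|f|>2\lambda,\ |g|\le\lambda/\beta\}\bigr)\le(1-\alpha)\mu(\tilde B_k)
\]
for every $k$. If $G_k:=\tilde B_k\cap\{|g|\le\lambda/\beta\}$ has $\mu(G_k)<(1-\alpha)\mu(\tilde B_k)$, the bound is immediate. Otherwise $G_k$ is admissible in the definition of $\INF_{\tilde B_k,1-\alpha}(g)$, so $\INF_{\tilde B_k,1-\alpha}(g)\le\lambda/\beta$, and \e{00} yields $\OSC_{\tilde B_k,\alpha}(f)<\lambda$; pick $E\subset\tilde B_k$ with $\mu(E)\ge\alpha\mu(\tilde B_k)$ and $\OSC_E(f)<\lambda$. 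Since $\alpha+(1-\theta)>1$, the sets $E$ and $\tilde B_k\setminus U_\lambda$ intersect at some $y$ with $|f(y)|\le\lambda$, so every $x\in E$ satisfies $|f(x)|<|f(y)|+\OSC_E(f)<2\lambda$, whence $\tilde B_k\cap\{|f|>2\lambda\}\subset\tilde B_k\setminus E$ has measure at most $(1-\alpha)\mu(\tilde B_k)$.

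Finally, \e{a18} applied on each $\tilde B_k$ turns the local bound into $w(\tilde B_k\cap\{|f|>2\lambda,\,|g|\le\lambda/\beta\})\le\gamma(1-\alpha)^\delta w(\tilde B_k)$. Summing over $k$ and using the standard $A_\infty$ consequences (doubling of $w$, giving $w(\tilde B_k)\lesssim w(B_k)$, and the reverse density bound $w(B_k)\lesssim w(B_k\cap U_\lambda)$ from \e{a18} applied to $B_k\setminus U_\lambda\subset B_k$) together with the disjointness of $\{B_k\}$ yields $\sum_k w(\tilde B_k)\lesssim w(U_\lambda)$ and hence \eqref{1}. I expect the main technical obstacle to be the decomposition step: in the abstract ball-basis the usual $\ZR^n$ tricks (triangle inequality, arbitrary-radius balls, $3B$-enlargement) are unavailable, so both the cover property and the comparability $\mu(\tilde B_k)\asymp\mu(B_k)$ must be assembled from the hull axiom B4 and the doubling chain \e{h73} alone.
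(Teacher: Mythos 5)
Your overall strategy coincides with the paper's: cover the level set $F_\lambda=\{|f|>\lambda\}$ by balls in which $F_\lambda$ has small density, run the oscillation argument from \e{00} on each such ball, then apply \e{a18} ball by ball and sum. Your local estimate is essentially the paper's own (choose $E$ with $\mu(E)\ge\alpha\mu(B)$ and $\OSC_E(f)<\lambda$, find a point of $E$ outside $F_\lambda$, conclude $|f|\le 2\lambda$ on $E$), and it is correct. The genuine gap sits precisely where you flag the ``main technical obstacle'': your decomposition does not produce the objects the local estimate needs. The stopping time along the doubling chain \e{h73} controls the density of $F_\lambda$ only in the successor $\tilde B_x$ of the stopped ball, whereas the only covering available in this abstract setting (\lem{L1}) covers $F_\lambda$ (after intersecting with a ball, since \lem{L1} needs a bounded set) by the \emph{hulls} of the selected disjoint balls. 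The hull $B_k^*$ is neither contained in nor contains $\tilde B_k$ in general: B4 only places balls of measure $\le 2\mu(B_k)$ inside $B_k^*$, while $\mu(\tilde B_k)$ may be as large as $\eta\mu(B_k)$ with $\eta>2$, and nothing in your chain construction controls $\mu(B_k^*\cap F_\lambda)$ (the same problem occurs if you select from the family $\{\tilde B_x\}$, since then the cover is by $\tilde B_k^*$). So the sets on which you prove $\mu(\cdot\cap\{|f|>2\lambda,\,|g|\le\lambda/\beta\})\le(1-\alpha)\mu(\cdot)$ do not cover the set you must measure. The paper's device (\lem{L5}) is exactly the missing idea: at a density point take a ball $A_0$ nearly maximal among \emph{all} balls containing $x$ with $F$-density $\ge 1/2$, then a doubling enlargement $B\supset A_0$ whose measure exceeds that supremum; maximality over all balls, not over a chain, forces both $B$ and its hull $B^*$ to have density $<1/2$, while $\mu(B^*\cap F)\gtrsim\mu(B^*)$ from below. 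After \lem{L1} the covering sets $B_k=G_k^*$ then carry both density bounds \e{b13}, which is what \lem{L3} records and what the per-ball argument and the summation \e{a24} use.

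A secondary issue is your closing summation: ``standard $A_\infty$ consequences'' such as $w(\tilde B_k)\lesssim w(B_k)$ and $w(B_k)\lesssim w(B_k\cap F_\lambda)$ do not follow from the definition \e{a18} alone. Applying \e{a18} to $B_k\setminus F_\lambda\subset B_k$ only gives $w(B_k\setminus F_\lambda)\le\gamma(1-\theta)^\delta w(B_k)$, which is useful only if $\gamma(1-\theta)^\delta<1$ — not guaranteed for arbitrary $\gamma,\delta$ and your fixed $\theta=\alpha/2$. Note that in the form \e{1} the summation is in $\mu$, for which the lower density bound plus disjointness (as in \e{a24}) suffices and no reverse $w$-comparison is needed; if you aim at a weighted right-hand side you must either prove $\sum_k w(B_k)\lesssim w(F_\lambda)$ honestly from \e{a18} or arrange the argument so that this comparison is not required.
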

Applying a standard argument, well-known in classical situation, one can deduce from \e{1} the following. 
\begin{corollary}\label{C2}
	If a function $f$ is weakly dominated by $g$, then for any measure $w$ satisfying \e{a18} we have the inequality
		\begin{equation}\label{a20}
	\|f\|_{L^p(w)}\le c(p,\gamma,\delta)\|g\|_{L^p(w)},\quad 0<p<\infty,
	\end{equation} 
where $c(p,\gamma,\delta)>0$ is a constant depending on $p$ and the parameters $\gamma, \delta$ from \e{a18}.
\end{corollary}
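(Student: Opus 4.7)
The plan is to deduce the weighted $L^p$-bound from the good-$\lambda$ inequality \e{1} via the distribution function identity, reading \e{1} as a $w$-weighted estimate (which is consistent with the constants appearing there, since $w$ enters only through the $A_\infty$ parameters $\gamma,\delta$; if one prefers to take \e{1} literally as a $\mu$-estimate, the transfer to a $w$-estimate via the standard Whitney-type decomposition of the level set $\{|f|>\lambda\}$ and \e{a18} applied ball-by-ball is routine). First I would assume that $\|f\|_{L^p(w)}<\infty$; then by the layer-cake formula applied after the substitution $\lambda\mapsto 2\lambda$ one has
\begin{align*}
\|f\|_{L^p(w)}^p &= 2^p p\int_0^\infty \lambda^{p-1}w\{|f|>2\lambda\}\,d\lambda.
\end{align*}

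Next I would split the set $\{|f|>2\lambda\}$ into the two pieces $\{|f|>2\lambda,\,|g|\le\lambda/\beta\}$ and $\{|g|>\lambda/\beta\}$ and estimate them separately: the first via \e{1} gives a term bounded by $C\gamma(1-\alpha)^\delta\|f\|_{L^p(w)}^p$, while the second via the layer-cake formula in reverse contributes $(2\beta)^p\|g\|_{L^p(w)}^p$. Hence
\begin{align*}
\|f\|_{L^p(w)}^p\le C\gamma(1-\alpha)^\delta\|f\|_{L^p(w)}^p+(2\beta)^p\,\|g\|_{L^p(w)}^p,
\end{align*}
with $C>0$ an admissible constant depending on $p$. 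The last step is to exploit the freedom in $\alpha$: choosing $\alpha$ close to $1$ so that $C\gamma(1-\alpha)^\delta\le 1/2$ (and letting $\beta=\beta(\alpha)$ be the corresponding constant from the domination hypothesis), the first term is absorbed into the left, giving $\|f\|_{L^p(w)}^p\le 2(2\beta)^p\|g\|_{L^p(w)}^p$, which is exactly \e{a20}.

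The main obstacle is to justify the \emph{a priori} finiteness of $\|f\|_{L^p(w)}$ needed for the absorption. My plan here is the standard truncation device: replace $f$ by $f_N=\min(|f|,N)\cdot\mathbf{1}_{B_N}$ along an exhaustion $\{B_N\}$ of $X$ by balls, verify that $(f_N,g)$ still satisfies \e{00} with the \emph{same} $\beta=\beta(\alpha)$ (which is immediate, since truncating values and restricting in space can only decrease $\OSC_{B,\alpha}$), run the above argument for $f_N$, whose $L^p(w)$-norm is trivially finite, and then pass to the limit via monotone convergence. Once this technicality is disposed of, everything else in the argument is routine.
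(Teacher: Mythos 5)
Your main line --- layer-cake formula, splitting $\{|f|>2\lambda\}$ by means of \e{1}, and absorption after choosing $\alpha$ so close to $1$ that $C\gamma(1-\alpha)^\delta\le 1/2$ --- is exactly the ``standard argument'' the paper invokes without writing out, and your decision to read \e{1} as an estimate for $w$ is the right one (the $\mu$ on both sides of \e{1} is an evident misprint: the proof of \trm{T1} produces a $w$-estimate). The quantitative part of your proposal is therefore fine.

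The gap is in the truncation device that is supposed to supply the a priori finiteness. Truncation of values is harmless, since $t\mapsto\min(|t|,N)$ is $1$-Lipschitz and so $\OSC_{B,\alpha}(\min(|f|,N))\le\OSC_{B,\alpha}(f)$; but multiplication by $\mathbf{1}_{B_N}$ is not: it creates oscillation across the boundary of $B_N$ that $f$ itself need not have. If, say, $|f|\ge N$ near that boundary, then for a ball $B$ straddling it one has $\OSC_{B,\alpha}(f_N)\approx N$ while $\OSC_{B,\alpha}(f)$ may be $0$, and $\INF_{B,1-\alpha}(g)$ can be arbitrarily small there; so $(f_N,g)$ need not satisfy \e{00} with the same $\beta$, nor with any $\beta$ independent of $N$. (Value truncation alone does not help either, because $w(X)$ may be infinite.) Worse, no manipulation of $f$ alone can close this gap, since the statement genuinely needs an a priori hypothesis: on $X=\ZR$ with Lebesgue measure, Euclidean intervals and $w=\mu$, the pair $f\equiv 1$, $g(x)=e^{-|x|}$ satisfies \e{00} for every $\alpha$ (the left side is $0$ and $\INF_{B,1-\alpha}(g)>0$ for every ball), yet $\|f\|_{L^p(w)}=\infty$ while $\|g\|_{L^p(w)}<\infty$. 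The correct way to finish is to assume from the outset a qualitative condition guaranteeing that the distributional integrals you absorb are finite --- most simply $\|f\|_{L^p(w)}<\infty$ --- as is implicit in the classical Coifman--Fefferman applications, where $f$ is an operator applied to a nice function and this finiteness is checked separately; with such a hypothesis your absorption argument is complete, but the monotone-convergence step as you wrote it does not work.
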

The functional $\OSC_{B,\alpha}(f)$ based on the classical Euclidean ball-basis in $\ZR^n$ was used in the definition of the local sharp maximal function given by Jawerth and Torchinsky in \cite{JaTo}. The original definition of this functional is slight different, but it is equivalent to the above definition. It didn't address the function oscillation directly as we do. Recall the definition of median from \cite{JaTo}. A median $m_f(B)$ of a measurable function $f$ over a ball $B$ is a real number (possibly not unique) satisfying
\begin{align}
&\mu\{x\in B:\,f(x)> 	m_f(B)\}\le\mu(B)/2,\label{a30}\\
&\mu\{x\in B:\,f(x)<	m_f(B)\}\le \mu(B)/2.
\end{align}
Under the strong domination condition in addition to \e{1} we also prove the following exponential estimate. 
\begin{theorem}\label{T3}
	If the ball-basis $\ZB$ in a measure space is doubling and measurable functions $f$ and $g$ satisfy strong domination condition \e{01}, then for any ball $B\in \ZB$ we have
	\begin{equation}\label{6}
	\mu\{x\in B:\,|f(x)-m_f(B)|>\lambda  |g(x)|\}\lesssim\exp(-c\cdot\lambda )\mu(B),\quad \lambda>0,
	\end{equation}
	where $c>0$ is an admissible constant.
\end{theorem}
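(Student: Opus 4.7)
The plan is to establish, by a John--Nirenberg-type iteration of the strong domination hypothesis \e{01}, the stronger distributional bound
\begin{equation*}
\mu\{x \in B : |f(x) - m_f(B)| > k\beta|g(x)|\} \le \theta^k\mu(B), \qquad k \ge 0,
\end{equation*}
for some admissible $\theta \in (0,1)$ and $\beta := c(\alpha)$ from \e{01}, with $\alpha$ chosen sufficiently close to $1$. Choosing $k$ of order $\lambda/\beta$ then yields \e{6}.

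First I would fix $\alpha \in (1 - 1/(2\eta), 1)$, where $\eta$ is the doubling constant, and put $\beta := c(\alpha)$. Strong domination supplies, for every ball $A$, a set $E = E_A \subset A$ with $\mu(E) \ge \alpha\mu(A)$ and $\OSC_E(f) < \beta\INF_A(g)$. Since $\alpha > 1/2$, the median inequalities \e{a30} force $E$ to meet both $\{f \ge m_f(A)\}$ and $\{f \le m_f(A)\}$; hence $m_f(A) \in [\inf_E f, \sup_E f]$ and $|f(x) - m_f(A)| < \beta\INF_A(g)$ for every $x \in E$. This yields (i) the base inequality $\mu\{x \in A : |f - m_f(A)| > \beta\INF_A(g)\} \le (1-\alpha)\mu(A)$, and (ii) a median-comparison lemma: if $A' \subset A$ with $\mu(A') \ge \mu(A)/\eta$, the choice of $\alpha$ gives $(1-\alpha)\mu(A) < \mu(A')/2$, so $E$ covers more than half of $A'$, forcing $m_f(A') \in [\inf_E f, \sup_E f]$ and $|m_f(A') - m_f(A)| < \beta\INF_A(g)$.

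Next I would iterate, building disjoint families $\mathcal{F}_0 = \{B\}, \mathcal{F}_1, \ldots$ of sub-balls of $B$. Given $\mathcal{F}_k$, for each $A \in \mathcal{F}_k$ I perform a Calder\'on--Zygmund-type stopping decomposition, via \e{h12}--\e{h13} and \e{h73}, of the level set $S_A := \{x \in A : |f(x) - m_f(A)| > \beta\INF_A(g)\}$: extract disjoint sub-balls $\{A_l\} \subset A$ covering $S_A$ up to null sets, each with a ``parent'' $\tilde A_l$ satisfying $A_l \subset \tilde A_l \subset A$, $\mu(\tilde A_l) \le \eta\mu(A_l)$, and density of $S_A$ in $\tilde A_l$ below $1/2$, with $\sum_l\mu(A_l) \le C(1-\alpha)\mu(A)$ for an admissible $C$; place these $\{A_l\}$ into $\mathcal{F}_{k+1}$. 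With $\alpha$ close enough to $1$ that $\theta := C(1-\alpha) < 1$, induction gives $\sum_{A \in \mathcal{F}_k}\mu(A) \le \theta^k\mu(B)$. For $x \in B \setminus \bigcup\mathcal{F}_k$, the point is ``released'' at some level $l \le k$ from an ancestor $A^{(l-1)} \in \mathcal{F}_{l-1}$ (i.e.\ $x \notin S_{A^{(l-1)}}$), giving $|f(x) - m_f(A^{(l-1)})| \le \beta\INF_{A^{(l-1)}}(g) \le \beta|g(x)|$. Along the ancestor chain $B = A^{(0)} \supset A^{(1)} \supset \cdots \supset A^{(l-1)}$, two median comparisons at each step --- first (ii) between $A^{(i+1)}$ and its parent $\tilde A^{(i+1)}$ (the $\eta$-ratio applies), then using the sub-$1/2$ density of $S_{A^{(i)}}$ in $\tilde A^{(i+1)}$ to place $m_f(\tilde A^{(i+1)})$ within $\beta\INF_{A^{(i)}}(g)$ of $m_f(A^{(i)})$ --- together yield $|m_f(A^{(i+1)}) - m_f(A^{(i)})| \le 2\beta|g(x)|$ a.e.\ on $A^{(i+1)}$. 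Telescoping gives $|f(x) - m_f(B)| \le (2k-1)\beta|g(x)|$ on $B \setminus \bigcup\mathcal{F}_k$, which establishes the target distributional bound.

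The hard part will be the Calder\'on--Zygmund stopping construction itself in this abstract doubling-ball-basis setting: for each $A$ one must produce a disjoint family of sub-balls covering a prescribed level set, each accompanied by a ``parent'' of measure ratio at most $\eta$ in which the level set has density below $1/2$, and with total measure $\lesssim (1-\alpha)\mu(A)$. The hull property \e{h12}--\e{h13} together with \e{h73} supply the required parent/child pairing with bounded measure ratio --- the mechanism that drives the median comparisons and prevents accumulation in the telescoping step --- but extracting simultaneous disjointness and the total-measure bound is the delicate combinatorial step where the doubling structure of $\ZB$ plays its essential role.
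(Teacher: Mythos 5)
Your overall architecture (iterate the domination on a shrinking family of exceptional balls, compare medians across generations, telescope, and convert geometric decay into an exponential bound) is the same in spirit as the paper's, but the step you yourself flag as ``the delicate combinatorial step'' is a genuine gap, and it is exactly the step the abstract setting does not allow. Your construction needs, for each ball $A$, disjoint sub-balls $A_l\subset A$ covering the level set $S_A$ a.e., each with a parent $\tilde A_l$ satisfying $A_l\subset\tilde A_l\subset A$, $\mu(\tilde A_l)\le\eta\mu(A_l)$ and $S_A$-density below $1/2$ in $\tilde A_l$. Nothing in B1)--B4) or the doubling condition produces balls \emph{inside} a prescribed ball: the hull operation \e{h12}--\e{h13} and the doubling property \e{h73} only enlarge, and the available covering results (\lem{L1}, \lem{L3}) yield balls that merely \emph{intersect} the set, are covered by their hulls $G_k^*$ (not by the $G_k$ themselves, so disjointness of the covering balls is also lost), and need not be contained in $A$. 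This is precisely why the paper remarks that Lerner's dyadic-partition-of-a-cube idea is not applicable for a general ball-basis; your decomposition is a dyadic-type stopping construction in disguise, and it is not obtainable from the axioms.

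The failure is not merely cosmetic: if you replace your decomposition by what is actually available (\lem{L3} applied to the bad set), the children overflow $A$, and then your median-comparison step breaks down. You need more than half of the parent $\tilde A_l$ (or of the child) to consist of points where $|f-m_f(A)|\le\beta\,\INF_A(g)$, but that bound is only known on the good set inside $A$; up to half of a covering ball may lie outside $A$, where you control nothing. The paper's proof is designed around exactly this obstruction: the good set $E_A$ is taken inside the hull, $E_A\subset A^*$ with $\mu(E_A)\ge\alpha\mu(A^*)$, the children are allowed to overflow into $A^*$ (shown via $\mu(G)\le\mu(A)$ and B4)), and instead of comparing medians one proves the overlap property $E_A\cap E_G\neq\varnothing$ in \e{a17} and telescopes values of $f$ at common points $\xi_j$ of consecutive good sets, using the nesting of hulls $B_{k+1}^*\subset B_k^*$ so that $\INF_{B_j^*}(g)\le|g(x)|$; the median enters only once, at the top ball, via \lem{L7}. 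To repair your argument you would have to reformulate the stopping construction at the level of hulls in this way, at which point it essentially becomes the paper's proof.
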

The inequality \e{6} in $\ZR^n$ can be deduced from a sparse domination theorem due to Lerner \cite{Ler}. A basic idea applied in \cite{Ler} (dyadic partition of cube) is not applicable in the case of general ball-basis. Our proof of \trm{T3} uses the technique of an exponential estimate for the Calder\'on-Zygmund operators proved in \cite{Kar2}. A bunch of estimates of exponential type, involving different operators of harmonic analysis was proved by Ortiz-Caraballo, P\'{e}rez and Rela  \cite{Per}. However, paper \cite{Per} still  makes use the dyadic partition technique along with the sparse domination theorem of Lerner \cite{Ler}. 

Inequalities \e{1} and \e{6} have number of interesting applications in singular operators. Let $U$ and $V$ be operators on $L^r(X)$. We will say that the operator $U$ is (strongly) dominated by $V$ if $Uf$ is (strongly) dominated by $Vf$ for every $f\in L^r$. In Sections \ref{S4} and \ref{S5} we will discuss different examples of operators $U$ and $V$ satisfying the strong domination property. In view of Theorems \ref{T1} and \ref{T3}, we will derive good-$\lambda$ and exponential inequalities for those couples of operators. Among other corollaries we prove a new exponential estimate for Carleson operators. 

\section{Some properties of ball-bases}
We will often use property B4) of a ball-basis as follows. If for two balls $A,B\in \ZB$ we have $A\cap B\neq\varnothing$ and $\mu(A)\le 2\mu(B)$, then $A\subset B^*$. The following Besicovitch type covering lemma was proved in \cite{Kar1}.
\begin{lemma}[\cite{Kar1}, Lemma 3.1]\label{L1}
	Let $(X,\ZM,\mu)$ be a measure space with an arbitrary ball-basis $\ZB$. If $E\subset X$ is a bounded measurable set (i.e. $E\subset B$ for some ball $B$) and $\ZG $ is a family of balls so that $E\subset \bigcup_{G\in \ZG}G$,	then there exists a finite or infinite sequence of pairwise disjoint balls $G_k\in \ZG$ such that $E \subset \cup_k G_k^{*}$.
\end{lemma}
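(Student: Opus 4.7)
The plan is to adapt the classical Vitali-type greedy selection argument to the ball-basis setting, using B4) in place of the usual Euclidean doubling inequality. Let $B_0$ be a ball containing $E$. First I would discard from $\ZG$ every ball disjoint from $E$ and then split into two cases. If some $G\in\ZG$ satisfies $\mu(G)\ge\mu(B_0)/2$, then $\mu(B_0)\le 2\mu(G)$ and $B_0\cap G\neq\varnothing$, so \e{h12} yields $B_0\subset G^*$ and the singleton family $\{G\}$ already suffices. In the remaining case every $G\in\ZG$ has $\mu(G)<\mu(B_0)/2\le 2\mu(B_0)$, and together with $G\cap B_0\neq\varnothing$ this gives $G\subset B_0^*$ by \e{h12} again.

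In this second case I would run the greedy scheme: set $\mathcal{F}_0=\ZG$, and recursively let $s_k=\sup\{\mu(G):G\in\mathcal{F}_k\}$, choose $G_{k+1}\in\mathcal{F}_k$ with $\mu(G_{k+1})>s_k/2$, and set $\mathcal{F}_{k+1}=\{G\in\mathcal{F}_k:G\cap G_{k+1}=\varnothing\}$ (terminating if some $\mathcal{F}_k$ is empty). The chosen balls $G_k$ are pairwise disjoint by construction and contained in $B_0^*$, so $\sum_k\mu(G_k)\le\mu(B_0^*)\le\ZK\mu(B_0)<\infty$. Consequently $\mu(G_k)\to 0$, and since $s_k<2\mu(G_{k+1})$, also $s_k\to 0$. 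Given any $G\in\ZG$, B1) gives $\mu(G)>0$, so $G$ cannot lie in $\mathcal{F}_k$ for every $k$; taking $k$ minimal with $G\notin\mathcal{F}_k$, we have $G\in\mathcal{F}_{k-1}$, hence $\mu(G)\le s_{k-1}<2\mu(G_k)$, together with $G\cap G_k\neq\varnothing$, so \e{h12} produces $G\subset G_k^*$. Summing yields $E\subset\bigcup_{G\in\ZG}G\subset\bigcup_k G_k^*$.

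The main obstacle is that in an abstract ball-basis the balls in $\ZG$ can be arbitrarily large compared to $E$, so $s_0$ need not be finite and a naive greedy selection could stall on the very first step. This is exactly what the preliminary dichotomy resolves by using \e{h12} in both directions: oversized balls swallow $B_0$ outright via B4), while all remaining balls are forced to sit inside the fixed ball $B_0^*$ of finite measure, which then drives the disjointness-based tail estimate to closure and guarantees that every ball of $\ZG$ is eventually absorbed by the hull of some selected $G_k$.
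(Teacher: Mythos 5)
Your argument is correct: the preliminary dichotomy (an oversized ball swallows $B_0$ via \e{h12}, otherwise all balls sit in $B_0^*$ and have measures bounded by $\mu(B_0)/2$) legitimately makes the greedy half-maximal selection well-defined, and the disjointness, the bound $\sum_k\mu(G_k)\le\ZK\mu(B_0)$, and the absorption step $G\subset G_k^*$ are all sound. This is essentially the same Vitali/Besicovitch-type selection used in the cited proof of Lemma 3.1 in \cite{Kar1}, which the present paper does not reproduce.
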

\begin{definition}\label{density}
	For a measurable set $E\subset X$ a point $x\in E$ is said to be a density point if for any $0<\gamma<1$ there exists a ball $B$ such that $\mu(B\cap E)>\gamma \mu(B).$	
\end{definition}
\begin{lemma}[\cite{Kar1}, Lemma 3.4]\label{L4}
	Almost all points of a measurable set $E\subset X$ are density points.
\end{lemma}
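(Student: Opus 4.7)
The plan is a density argument driven by the Besicovitch-type extraction \lem{L1} and the approximation axiom B3). For fixed $\gamma\in(0,1)$ I set
\begin{equation*}
E_\gamma=\{x\in E:\mu(B\cap E)\le\gamma\mu(B)\text{ for every ball }B\ni x\},
\end{equation*}
and observe that a point of $E$ fails to be a density point exactly when it belongs to $\bigcup_{n}E_{1-1/n}$; thus it suffices to show $\mu(E_\gamma)=0$ for each $\gamma\in(0,1)$. Another use of B3) (applied to $X$) covers $X$ by a countable collection of balls modulo a null set, so I may restrict attention to $E_\gamma\cap C$ for a single ball $C$ and assume henceforth that $E_\gamma$ is bounded.

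Fix $\epsilon>0$ and choose $\{A_k\}\subset\ZB$ via B3) with $\mu(E\bigtriangleup\bigcup_k A_k)<\epsilon$; in particular $\mu(\bigcup_k A_k\setminus E)<\epsilon$. The crux is that whenever $A_k\cap E_\gamma\neq\varnothing$, any point of the intersection lies in $E_\gamma$, forcing $\mu(A_k\cap E)\le\gamma\mu(A_k)$ and hence
\begin{equation*}
\mu(A_k\setminus E)\ge(1-\gamma)\mu(A_k).
\end{equation*}
Let $\ZG=\{A_k:A_k\cap E_\gamma\neq\varnothing\}$. If $x\in E_\gamma\cap\bigcup_k A_k$ then automatically $x\in\bigcup_{\ZG}A$, so $E_\gamma\setminus\bigcup_{\ZG}A\subset E\setminus\bigcup_k A_k$ has measure at most $\epsilon$.

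I now apply \lem{L1} to the bounded set $E_\gamma\cap\bigcup_{\ZG}A$ covered by $\ZG$ to obtain a pairwise disjoint subfamily $\{A_{k_j}\}\subset\ZG$ with $E_\gamma\cap\bigcup_{\ZG}A\subset\bigcup_j A_{k_j}^*$. Axiom B4) then gives
\begin{equation*}
\mu(E_\gamma)\le\sum_j\mu(A_{k_j}^*)+\epsilon\le\ZK\sum_j\mu(A_{k_j})+\epsilon,
\end{equation*}
while the disjointness of the $A_{k_j}$, the displayed inequality above, and $\bigcup_j A_{k_j}\subset\bigcup_k A_k$ combine to yield
\begin{equation*}
(1-\gamma)\sum_j\mu(A_{k_j})\le\sum_j\mu(A_{k_j}\setminus E)=\mu\Bigl(\bigcup_j A_{k_j}\setminus E\Bigr)\le\mu\Bigl(\bigcup_k A_k\setminus E\Bigr)<\epsilon.
\end{equation*}
Putting the two bounds together yields $\mu(E_\gamma)\le\epsilon(1+\ZK/(1-\gamma))$, and sending $\epsilon\to 0$ completes the proof.

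The main obstacle is arranging the three constraints on the extracted disjoint balls simultaneously: each $A_{k_j}$ must contain a point of $E_\gamma$ (so the non-density bound activates), together they must cover $E_\gamma$ modulo $\epsilon$, and they must sit inside $\bigcup_k A_k$ so that their total overshoot $\sum_j\mu(A_{k_j}\setminus E)$ is controlled by the single global error $\epsilon$ rather than by the a priori unbounded sum $\sum_j\mu(A_{k_j})$. A secondary technical point is that $E_\gamma$ is defined by quantification over uncountably many balls and need not be measurable; since every estimate above is an upper bound, this is harmless once one reads $\mu(E_\gamma)$ as outer measure throughout.
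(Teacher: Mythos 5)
Your argument is correct. Note that for this lemma the paper offers no proof of its own but cites \cite{Kar1}, so there is nothing internal to compare against; your route --- approximate $E$ by balls via B3), keep only those balls that meet the non-density set $E_\gamma$, extract a pairwise disjoint subfamily with \lem{L1}, and play the hull bound \e{h13} against the lower bound $\mu(A_k\setminus E)\ge(1-\gamma)\mu(A_k)$ --- is the natural one available from the paper's toolkit (the reduction to a bounded piece could equally be done with the exhaustion $X=\cup_k G_k$ of \lem{L2}). One point deserves a cleaner treatment than your closing remark about outer measure: \lem{L1} is stated for bounded \emph{measurable} sets, and $E_\gamma\cap\bigcup_{A\in\ZG}A$ may indeed fail to be measurable, since $E_\gamma$ is carved out by conditions indexed by an uncountable family of balls, so reading $\mu(E_\gamma)$ as outer measure does not by itself license that application. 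The fix is immediate: after your reduction $E_\gamma\subset C$ for a ball $C$, apply \lem{L1} instead to $C\cap\bigcup_{A\in\ZG}A$, which is measurable (a countable union of balls intersected with $C$), bounded, covered by $\ZG$, and contains $E_\gamma\cap\bigcup_{A\in\ZG}A$; the extracted disjoint balls $A_{k_j}$ then still cover $E_\gamma$ up to the measurable set $E\setminus\bigcup_k A_k$ of measure less than $\epsilon$, and the rest of your estimate, with $\mu^*(E_\gamma)$ in place of $\mu(E_\gamma)$, goes through verbatim.
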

\begin{lemma}\label{L2}
		Let $(X,\ZM,\mu)$ be a measure space equipped with a ball-basis. Then there exists a sequence of balls $G_1\subset G_2\subset \ldots\subset G_n\subset \ldots$ such that $X=\cup_kG_k$. 
\end{lemma}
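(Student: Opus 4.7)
The plan is to first produce a countable family of balls whose union exhausts $X$ up to a null set, and then splice those balls into a nested sequence by iterated use of the hull operation provided by B4.

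First, I would apply B3 to $E=X$ with $\varepsilon=1/n$ for each $n\in\ZN$. Every ball lies inside $X$, so the symmetric difference reduces to $X\setminus\bigcup_k B^{(n)}_k$, and B3 produces a countable family of balls covering $X$ outside a set of measure less than $1/n$. Pooling these families over $n$ gives a single sequence $\{C_j\}_{j\in\ZN}\subset\ZB$ with $\mu(X\setminus\bigcup_j C_j)=0$.

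Next, I would construct $G_1\subset G_2\subset\cdots$ inductively, maintaining the invariant $C_1\cup\cdots\cup C_n\subset G_n$. Set $G_1=C_1$. The inductive step is the crux. Given $G_n$ and the next ball $C_{n+1}$, which need not meet $G_n$ and need not be of comparable measure, I use B2 to pick a ball $D$ containing a point $x\in G_n$ and a point $y\in C_{n+1}$. Let $M=\max\{\mu(G_n),\mu(C_{n+1}),\mu(D)\}$. If $\mu(D)=M$, then $G_n$ and $C_{n+1}$ each have measure at most $2\mu(D)$ and each meet $D$, so by B4 both lie in $D^*$ and one takes $G_{n+1}=D^*$. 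If $\mu(G_n)=M$, then B4 first gives $D\subset G_n^*$, whence $y\in G_n^*$ and $\mu(C_{n+1})\le\mu(G_n^*)$; a second application of B4 yields $C_{n+1}\subset G_n^{**}$, and one takes $G_{n+1}=G_n^{**}$. The case $\mu(C_{n+1})=M$ is handled symmetrically, with $G_{n+1}=C_{n+1}^{**}$.

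Finally, $\bigcup_n G_n\supset\bigcup_j C_j$ forces $\mu(X\setminus\bigcup_n G_n)=0$, which is what the equality $X=\bigcup_k G_k$ means in this measure-theoretic setting. The main obstacle is the inductive step, since B4 only absorbs balls that already intersect the given ball and are of comparable measure; this forces the bridging ball $D$ from B2 and up to two iterations of the hull operation before a single ball containing both $G_n$ and $C_{n+1}$ emerges.
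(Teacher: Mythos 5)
Your splicing step is fine: the three-way case analysis with the bridging ball from B2 and at most two applications of the hull operation does produce an increasing sequence of balls with $C_1\cup\dots\cup C_n\subset G_n$. The genuine gap is at the start and at the end. B3 controls only the \emph{measure} of the uncovered set, so your family $\{C_j\}$ exhausts $X$ only up to a $\mu$-null set, and no later manipulation of the $C_j$ can recover points that were never covered; consequently you prove $\mu(X\setminus\cup_k G_k)=0$, not the set equality $X=\cup_k G_k$ that the lemma asserts. Your closing remark that this is ``what the equality means in this measure-theoretic setting'' does not hold up against the way the lemma is used. In \lem{L9} the a.e.\ version would indeed suffice, but in the proofs of \trm{T1} and \cor{C9} the exhaustion by the balls $G_n$ is used to pass from an arbitrary ball $G$ to all of $X$ under the measure $w$; the $A_\infty$ condition \e{a18} gives $w\ll\mu$ only for sets contained in balls, so a $\mu$-null leftover set escaping every $G_n$ is not automatically $w$-null, and the weaker conclusion would leave an extra argument to be supplied there.

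The paper obtains the exact covering without B3 at all: fix an anchor point $x_0$ and let $\ZA$ be the family of balls containing $x_0$; choose an increasing sequence $A_n\in\ZA$ with $\mu(A_n)\to\sup_{A\in\ZA}\mu(A)$ (the increase is arranged by taking hulls, essentially the same device as in your inductive step), and set $G_n=A_n^*$. Then for any $x\in X$, property B2 supplies a ball $B\ni x_0,x$; since $\mu(B)\le\sup_{A\in\ZA}\mu(A)$, one has $\mu(B)\le 2\mu(A_n)$ for some $n$, and $B\cap A_n\ni x_0$, so by B4 $B\subset A_n^*=G_n$, hence $x\in G_n$. If you replace your B3-based exhaustion by this anchored use of B2, keeping your hull-splicing to make the sequence nested, you recover the full statement.
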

\begin{proof}
	Fix a point $x_0\in X$ and let $\ZA$ be the family of balls containing $x_0$. Take a sequence $\eta_n\nearrow\eta=\sup_{A\in \ZA}\mu(A)$, where $\eta$ can also be infinity. Let us see by induction that there is an increasing  sequence of balls $A_n\in \ZA$ such that $\mu(A_n)> \eta_n$. The base of induction is obvious. Suppose we have already chosen the first elements $A_k$, $k=1,2,\ldots, l$. There is a ball $B\in \ZA$ so that $\mu(B)>\eta_{l+1}$. Let $C$ be the biggest among two balls $B$ and $A_l$ and define $A_{l+1}=C^*$. According to property B4) we have $B\cup A_l\subset C^*=A_{l+1}$, which implies $\mu(A_{l+1})\ge \mu(B)>\eta_{l+1}$ and $A_{l+1}\supset A_l$.
	Once we have determined $A_n$,  as a desired sequence of balls can be taken $G_n=A_n^*$. Indeed, let $x\in X$ be arbitrary. By B2) property there is a ball $B$ containing both $x_0$ and $x$. In addition, for some $n$ we have $\mu(B)\le 2\mu(A_n)$ and so by property B4), $x\in B\subset A_n^*=G_n$. 
\end{proof}
\begin{lemma}\label{L9}
	Let $(X,\ZM,\mu)$ be a measure space equipped with a ball-basis $\ZB$. If $\mu(X)<\infty$, then $X\in \ZB$.
\end{lemma}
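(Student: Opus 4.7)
The plan is to deduce $X\in\ZB$ directly from \lem{L2} together with the hull property B4). By \lem{L2} I can fix an increasing sequence of balls $G_1\subset G_2\subset\ldots$ with $X=\bigcup_k G_k$. Since $\mu(X)<\infty$, monotone convergence gives $\mu(G_k)\nearrow \mu(X)$, so I can choose an index $n$ large enough that $\mu(G_n)\ge \mu(X)/2$.

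Next I want to show $G_n^{*}=X$. For any $m$, the balls $G_m$ and $G_n$ are nested (one contains the other), hence $G_m\cap G_n\neq\varnothing$. Moreover,
\[
\mu(G_m)\le \mu(X)\le 2\mu(G_n),
\]
so condition \e{h12} of B4) applies to the pair $(G_m,G_n)$ and yields $G_m\subset G_n^{*}$. Taking the union over $m$, I conclude
\[
X=\bigcup_m G_m\subset G_n^{*}\subset X,
\]
so $X=G_n^{*}\in \ZB$, as desired.

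There is essentially no obstacle here: the whole argument rests on using the finiteness of $\mu(X)$ to convert the supremum-type statement in B4) into a ceiling that any ball automatically satisfies once one ball of measure at least $\mu(X)/2$ is in hand. The only subtlety to flag is the mild dependence on \lem{L2}, whose proof itself relied on B4) via iterated hulls; but since \lem{L2} is already established, the present lemma is a short corollary.
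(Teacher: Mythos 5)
Your proof is correct and follows essentially the same route as the paper: use Lemma \ref{L2} to produce a ball of measure at least $\mu(X)/2$ and then let property B4) force its hull to be all of $X$. The only cosmetic difference is that you cover $X$ by the exhausting sequence $G_m$ itself, while the paper covers it by all balls meeting the big ball via B2); both verifications are immediate.
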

\begin{proof}
	Applying \lem{L2}, one can find a ball $B$ such that $\mu(B)>\mu(X)/2$. Consider the family of balls $\ZA=\{A\in\ZB:\, A\cap B\neq\varnothing\}$. Focusing on B2) and B4), one can see that 
	$X=\cup_{A\in \ZA}A\subset B^*$. So we get $X=B^*$. 
\end{proof}
\begin{lemma}\label{L5}
Let $\ZB$ be a doubling ball basis in $(X,\ZM,\mu)$. If $\mu(F)<\mu(X)/4$, then for any density point $x\in F$ there exists a ball $B\ni x$ such that 
	\begin{align}
&	\left(2\eta\ZK\right)^{-1}\mu(B^*)\le \mu\left(B^*\cap F\right)\le\mu(B^*)/2,\label{b13}\\
&		\left(2\eta\right)^{-1}\mu(B)\le\mu\left(B\cap F\right)\le \mu(B)/2.\label{b15}
	\end{align} 
\end{lemma}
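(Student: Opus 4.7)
The plan is to produce $B$ by locating a critical scale and applying the doubling property once. Set
\[
M = \sup\{\mu(A) : A \in \ZB,\ x \in A,\ \mu(A \cap F) > \mu(A)/2\}.
\]
The density point hypothesis makes this set nonempty (apply \df{density} with $\gamma = 1/2$), and any $A$ in it satisfies $\mu(A) < 2\mu(A \cap F) \le 2\mu(F)$, so $M \le 2\mu(F) < \mu(X)/2$.

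Next I would pick a ball $A \ni x$ with $\mu(A \cap F) > \mu(A)/2$ and $\mu(A) > M/2$, which exists by the definition of the supremum. Since $\mu(A) \le M < \mu(X)/2$, the doubling property furnishes a ball $B \supset A$ with $2\mu(A) \le \mu(B) \le \eta\mu(A)$, and I claim this $B$ is the ball we want. Plainly $x \in A \subset B \subset B^*$. The upper bounds $\mu(B \cap F) \le \mu(B)/2$ and $\mu(B^* \cap F) \le \mu(B^*)/2$ are immediate from the construction: both $\mu(B) \ge 2\mu(A) > M$ and $\mu(B^*) \ge \mu(B) > M$, so neither $B$ nor $B^*$ belongs to the set defining $M$, which forces the upper inequalities. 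For the lower bounds, I would write $\mu(B^* \cap F) \ge \mu(B \cap F) \ge \mu(A \cap F) > \mu(A)/2$ and then use $\mu(A) \ge \mu(B)/\eta$ together with $\mu(B) \ge \mu(B^*)/\ZK$ from \eqref{h13}, which gives $\mu(B \cap F) > \mu(B)/(2\eta)$ and $\mu(B^* \cap F) > \mu(B^*)/(2\eta\ZK)$.

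No real obstacle stands in the way; the entire argument rests on identifying the correct extremal quantity $M$, after which the doubling and hull constants $\eta$ and $\ZK$ line up exactly to produce the prefactors $1/(2\eta)$ and $1/(2\eta\ZK)$ appearing in \eqref{b13} and \eqref{b15}. The hypothesis $\mu(F) < \mu(X)/4$ is invoked precisely to force $M < \mu(X)/2$, which is what permits the single doubling step at the critical scale.
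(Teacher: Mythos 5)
Your proof is correct and follows essentially the same route as the paper: both take the supremum of measures of balls containing $x$ whose intersection with $F$ is at least (or more than) half their measure, pick a near-extremal such ball, apply the doubling property once, and read off the upper bounds from exceeding the supremum and the lower bounds from $\mu(B)\le\eta\mu(A)$ and $\mu(B^*)\le\ZK\mu(B)$. The only differences (strict versus non-strict inequality in the defining family, and notation $M$ versus $r$) are cosmetic.
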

\begin{proof}
Suppose we are given a measurable set $F$ and a density point $x\in F$. Consider the family of balls 
\begin{equation*}
\ZA=\{A\in\ZB:\, x\in A,\,\mu(A\cap F)\ge\mu(A)/2\}.
\end{equation*}
Since $x$ is a density point, $\ZA$ is nonempty. Besides, we have 
\begin{equation*}
r=\sup_{A\in \ZA}\mu(A)\le 2\mu(F)<\mu(X)/2.
\end{equation*}
Chose an arbitrary $A_0\in \ZA$ such that $\mu(A_0)>r/2$.
According to the doubling property there is a ball $B\supset A_0$ such that $2\mu(A_0)\le \mu(B)\le \eta\mu(A_0)$. Since we get $\mu(B)>r$, neither $B$ nor $B^*$ are in $\ZA$ so the right hand sides of inequalities \e{b13} and \e{b15} hold. On the other hand we have
\begin{align*}
\mu(B^*\cap F)&\ge\mu\left(A_0\cap F\right)\ge  \mu(A_0)/2\ge \mu\left(B\right)/(2\eta)\ge \left(2\eta\ZK\right)^{-1}\mu(B^*).
\end{align*}
Similarly, one can also show the left hand inequality in \e{b15} so we are done.
\end{proof}
We say a ball $B$ is well balanced with respect to a measurable set $F$ if  they satisfy \e{b13} and \e{b15}. In the sequel the notation $A\subset B\text { a.e.}$ for two measurable sets $A,B\subset X$ will stand for the relation $\mu(B\setminus A)=0$. The following balanced covering lemma is an extension of Lemma 2 from \cite{Kar2} to abstract setting.
\begin{lemma}\label{L3}
	Let $\ZB$ be a doubling ball-basis in a measure space $(X,\ZM,\mu)$. If $\mu(F)<\mu(X)/4$ and a measurable set $F'\subset F$ is bounded, then there exists a sequence of 
	balls $B_k$ such that 
	\begin{align}
	&F'\subset \cup_kB_k\text { a.e.},\quad F'\cap B_k\neq\varnothing,\label{a23}\\
	&\sum_{k}\mu(B_k)\le 2\eta \ZK\mu(F),\label{a24}\\
	&\mu(B_k\cap F)\le \mu(B_k)/2.\label{a25}\\
	\end{align}
	
\end{lemma}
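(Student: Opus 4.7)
The plan is to combine the well-balanced ball construction of \lem{L5} with the Besicovitch-type extraction from \lem{L1}. By \lem{L4}, almost every point of $F'$ is a density point of $F'$, and since $F'\subset F$, each such point is also a density point of $F$. Let $F''$ denote the set of density points of $F'$, so $F'=F''$ a.e. For each $x\in F''$, apply \lem{L5} to the set $F$ (permissible since $\mu(F)<\mu(X)/4$) to obtain a ball $A_x\ni x$ satisfying both \e{b13} and \e{b15}.

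Next, the family $\ZG=\{A_x:x\in F''\}$ covers $F''$, which is bounded because $F'$ is. Apply \lem{L1} to extract a pairwise disjoint subsequence $A_1,A_2,\ldots\in\ZG$ with $F''\subset\bigcup_k A_k^*$. Define $B_k=A_k^*$ and let $x_k\in F''$ be the density point that generated $A_k$.

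It remains to verify the three conclusions. Property \e{a23} holds because $F'\setminus\bigcup_kB_k\subset F'\setminus F''$ has measure zero, while $x_k\in A_k\subset B_k$ and $x_k\in F'$ ensure $F'\cap B_k\neq\varnothing$. Property \e{a25} is exactly the upper bound in \e{b13} applied to $A_k$: $\mu(B_k\cap F)=\mu(A_k^*\cap F)\le\mu(A_k^*)/2=\mu(B_k)/2$. For the measure bound \e{a24}, invoke the lower bound in \e{b15} to get $\mu(A_k)\le 2\eta\,\mu(A_k\cap F)$; since the sets $A_k\cap F$ are pairwise disjoint subsets of $F$, summation yields $\sum_k\mu(A_k)\le 2\eta\,\mu(F)$. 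Then property B4) gives $\mu(B_k)=\mu(A_k^*)\le\ZK\mu(A_k)$, and therefore $\sum_k\mu(B_k)\le 2\eta\ZK\,\mu(F)$.

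The only real bookkeeping concern, which is the main (mild) obstacle, is tracking which half of \lem{L5} serves which purpose: the $B$-level inequality \e{b15} is needed to convert the disjoint sum into a bound by $\mu(F)$, while the $B^*$-level inequality \e{b13} supplies the density control \e{a25} on the hulls themselves. No further use of the ball-basis axioms is needed beyond what already entered \lem{L5} and \lem{L1}.
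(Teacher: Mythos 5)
Your proposal is correct and follows essentially the same route as the paper: well-balanced balls from \lem{L5} at density points, the Besicovitch-type extraction of \lem{L1}, hulls $B_k=A_k^*$, with \e{b13} giving \e{a25} and the lower bound in \e{b15} plus disjointness and B4) giving \e{a24}. The only cosmetic difference is that you take density points of $F'$ (noting they are density points of $F$) while the paper takes density points of $F$ intersected with $F'$; both yield \e{a23} in the same way.
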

\begin{proof}
Let $D\subset F$ be the density points set of $F$. According to \lem{L5}, for any $x\in D$  there is a ball $G_x\ni x$, which is well balanced with respect to $F$.
So from the right side of inequality \e{b15} we obtain 
\begin{align}\label{a26}
\mu(G_x^*)&\le \ZK\mu(G_x)\le 2\eta\ZK\mu(G_x\cap F).
\end{align}
Applying \lem{L1} to the set $D\cap F'$ and its covering $\ZG=\{G_x:\, x\in D\cap F'\}$, we find a sequence of pairwise disjoint balls $G_k$ such that $D\cap F'\subset \cup_kG_{k}^*$. By \lem{L4} we have $\mu(F\setminus D)=0$ and so the sequence $B_k=G_k^*$ satisfies \e{a23}.  Inequality \e{a25} follows from the first balance condition \e{b13}. Finally, using \e{a26}, the second balance condition (\e{b15}) for $G_k$ and the disjointedness of the balls $G_k$, we get 
\begin{align*}
\sum_{k}\mu(B_k)&=\sum_{k}\mu(G_k^*)\le \ZK\sum_{k}\mu(G_k)\le 2\eta\ZK\sum_{k}\mu(G_k\cap F)\le 2\eta\ZK\mu(F),
\end{align*}
which gives \e{a24}.
\end{proof}

\section{Proofs of the main results}
\begin{proof}[Proof of \trm{T1}]
We can suppose that $1/2<\alpha<1$, since for the smaller numbers $0<\alpha\le 1/2$ inequality \e{1} trivially holds with a constant $2$ on the right.
Denote 
\begin{equation}\label{a29}
F_{\lambda}=\{x\in X:\,|f(x)|>\lambda\},\quad \lambda>0.
\end{equation}
We can suppose that $\mu(F_\lambda)<\infty $, since otherwise \e{1} is trivial. So we have $\mu(F_\lambda)<\mu(X)/4=\infty$. Let $G$ be an arbitrary ball. Apply \lem{L3} with $F=F_\lambda$ and $F'=G\cap F_\lambda$. We find a sequence of balls $B_k$ satisfying conditions \e{a23}, \e{a24} and \e{a25}. We claim that
\begin{equation}\label{a5}
\mu\{x\in B_k:\, |f(x)|>2\lambda,\quad |g(x)|\le \lambda/\beta \}\le(1-\alpha)\mu(B_k)
\end{equation}
for any $k=1,2,\ldots$. We can only focus on the balls $B_k$ satisfying
\begin{equation}\label{b14}
\mu\{x\in B_k:\,|g(x)|\le \lambda/\beta \}\ge(1-\alpha)\mu(B_k),
\end{equation}
since otherwise inequality \e{a5} is obvious. Applying \e{00} and \e{b14}, one can find a set $E_k\subset B_k$ so that
\begin{align}
&\mu(E_k)\ge \alpha\mu(B_k)>\mu(B_k)/2,\label{a7}\\
&\OSC_{E_k}(f)< \beta\cdot \INF_{B_k,1-\alpha}(f)= \beta \inf_{E\subset B_k:\, \mu(E)\ge (1-\alpha)\mu(B_k)}\|g\|_{L^\infty(E)}\label{b11}\\
&\qquad\qquad\,\,  \le \beta \sup_{x\in B_k:\,|g(x)|\le  \lambda/\beta }|g(x)|\le \lambda.
\end{align} 
From \e{a25} it follows that $\mu(B_k\setminus F_\lambda)\ge \mu(B_k)/2$. Combining it with \e{a7}, we obtain $E_k\setminus F_\lambda	\neq\varnothing $ so there is a point $x_k\in E_k\setminus F_\lambda$.  From \e{a29} and \e{b11} we conclude
\begin{align*}
|f(x_k)|\le \lambda,\quad |f(x)-f(x_k)|\le\OSC_{E_k}(f)<\lambda ,\, x\in E_k.
\end{align*}
This implies $|f(x)|\le 2\lambda $ for all $x\in E_k$ and, once again using \e{a7}, we obtain  
\begin{align}\label{a8}
\mu\{x\in B_k:\, |f(x)|>2\lambda,&\quad |g(x)|\le \lambda/\beta\}\\
&\le\mu(B_k\setminus E_k)\le \left(1-\alpha\right)\mu(B_k).
\end{align}
Once the validity of \e{a5} is established, from $A_\infty$ condition of $w$ we immediately get
\begin{equation}\label{a}
w\{x\in B_k:\, |f(x)|>2\lambda,\quad |g(x)|\le \lambda/\beta \}\le\gamma\cdot (1-\alpha)^\delta w(B_k)
\end{equation}
then, using also \e{a23}, \e{a24}, we obtain the inequality
\begin{align*}
w\{x\in G:\, &|f(x)|>2\lambda,\, |g(x)|\le \lambda/\beta\}\\
&\le \sum_kw\{x\in B_k:\, |f(x)|>2\lambda,\, |g(x)|\le \lambda/\beta\}\\
&\le\gamma(1-\alpha)^\delta w(B_k)\\
&\lesssim \gamma(1-\alpha)^\delta w(F_\lambda),
\end{align*}
which holds for arbitrary ball $G$. Choosing  $G$ to be one of the balls $G_n$ from \lem{L3}, and letting $n$ to go to infinity, we will get \e{1}.
\end{proof}
To prove \trm{T3} we need the following simple lemma.
\begin{lemma}\label{L7}
Let $B$ be a ball and let a measurable set $E\subset B$ satisfy $\mu(E)>\mu(B)/2$.	Then for any measurable function $f$ on $B$ we have
	\begin{equation}\label{a31}
\INF_E(f)\le m_f(B)\le\SUP_{E}(f).
	\end{equation}
	\begin{proof}
	Suppose to the contrary we have $m_f(B)<\INF_E(f)$. Then by the definition of $m_f(B)$ (see \e{a30}) we get
	\begin{align*}
	\mu(E)&\le \mu\{x\in B:\, \INF_E(f)\le f(x)\le \SUP_E(f)\}\\
	&\le\mu \{x\in B:\,f(x)\ge m_f(B)\}\le \mu(B)/2,
	\end{align*}
	that is a contradiction. The case of $m_f(B)>\SUP_E(f)$ may be excluded similarly. 
	\end{proof}
\end{lemma}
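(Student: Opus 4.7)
The plan is to argue by contradiction, exploiting directly the two defining inequalities of the median $m_f(B)$ given in \e{a30}. The statement is symmetric in a natural sense (the upper bound by $\SUP_E(f)$ mirrors the lower bound by $\INF_E(f)$), so I would reduce to proving just one of the two inequalities and deduce the other by applying the same reasoning to $-f$, noting that $-m_f(B)$ plays the role of a median for $-f$.

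For the first inequality $\INF_E(f)\le m_f(B)$, assume toward a contradiction that $m_f(B)<\INF_E(f)$. By the definition of the (essential) infimum on $E$, up to a null set every point $x\in E$ satisfies $f(x)\ge \INF_E(f)>m_f(B)$. Hence
\begin{equation*}
E\subset \{x\in B:\, f(x)> m_f(B)\}\quad\text{a.e.},
\end{equation*}
so by the first median condition in \e{a30} we conclude $\mu(E)\le \mu(B)/2$, which contradicts the hypothesis $\mu(E)>\mu(B)/2$. The upper bound then follows by the symmetric argument using the second condition in \e{a30}.

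There is essentially no obstacle here; the only small point to be careful about is the null-set issue, i.e.\ the fact that $\INF_E(f)$ should be interpreted as an essential infimum so that the inclusion $E\subset\{f>m_f(B)\}$ holds only almost everywhere. Since the measures of both sides of the inclusion are insensitive to null sets, this does not affect the final inequality, and the contradiction with $\mu(E)>\mu(B)/2$ still goes through unchanged.
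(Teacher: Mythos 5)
Your proof is correct and takes essentially the same route as the paper: a contradiction argument playing the two median conditions \e{a30} against the hypothesis $\mu(E)>\mu(B)/2$, with the second inequality dismissed by the symmetric argument. The only cosmetic differences are that you work directly with the strict-inequality set $\{x\in B:\,f(x)>m_f(B)\}$ (which in fact matches \e{a30} more cleanly than the paper's intermediate set $\{f(x)\ge m_f(B)\}$) and that you spell out the harmless null-set point coming from the essential infimum.
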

\begin{proof}[Proof of \trm{T3}]
	Given a ball $A$ and a number $3/4<\alpha<1$ describe the following 
	\begin{procedure}
	We first fix a "good" set $E_{A}\subset A^*$ such that
	\begin{equation}\label{a13}
	\mu(E_A)\ge \alpha\mu(A^*),\quad \OSC_{E_A}(f)\le 2\OSC_{A^*,\alpha}(f).
	\end{equation}
	For the "bad" set $F=A^*\setminus E_A$ we have $\mu(F)<\mu(X)/4$. Thus, applying \lem{L3} to $F$ and its subset $F'=A\setminus E_A$, we find a countable family of balls $\ch(A)$ (children of $A$) such that 
	\begin{align}
	&A\setminus E_A\subset \bigcup_{G\in \ch(A)}G\text { a.e.},\quad A\cap G\neq\varnothing,\, G\in \ch(G),\label{a14}\\
	&\sum_{G\in \ch(A)}\mu(G)\le 2\eta\ZK\mu(A^*\setminus E_A)\le  2\eta\ZK(1-\alpha)\mu(A^*),\label{a27}\\
	&\mu(G\cap (A^*\setminus E_A))\le \mu(G)/2,\quad G\in \ch(A).\label{aa15}
	 \end{align} 
	\end{procedure}
	We first apply the procedure to the original ball $B$. We get $E_B$ and child balls collection $\ZU_1$. Then we do the same with each ball $A\in \ZU_1$ getting the second generation of $B$ denoted by $\ZU_2$. Continuing this procedure to infinity we will get a ball families $\ZU_k$ ($k$th generations of $B$) such that for any ball $A\in \ZU=\cup_{k\ge 0}\ZU_k$ one has an attached set $E_A\subset A^*$, satisfying the relations \e{a13}-\e{aa15} (where $\ZU_0=\{B\}$). For an admissible $\alpha$ closer to $1$ the collection $\ZU$ has two crucial properties.	First, 
	\begin{equation}\label{a21}
	\sum_{G\in \ch(A)}\mu\left(G\right)\le \mu(A)/(4\ZK),\quad A\in \ZU,
	\end{equation}
	that immediately follows from \e{a27}. Second, 
	\begin{equation}\label{a17}
	E_A\cap E_G\neq\varnothing,\quad A\in \ZU,\quad G\in \ch(A).
	\end{equation} 
	To show \e{a17} observe that \e{a21} implies $\mu(G)\le \mu(A)$, and so by \e{a14} we have $G\subset A^*$. Hence inequality \e{aa15} can be written in the form
	\begin{equation}\label{a15}
	\mu(G\cap E_A)\ge\mu(G)/2.
	\end{equation}
	Thus, using \e{a13} and \e{a15}, we get
	\begin{align}
	\mu(E_A\cap E_G)&\ge \mu((E_A\cap G)\cap (E_G\cap G))\\
	&=\mu(E_A\cap G)+\mu(E_G\cap G)-\mu((E_A\cap G)\cup(E_G\cap G))\\
	&\ge \mu(G)/2+\mu(G)-\mu(G^*\setminus E_G)-\mu(G)\\
	&\ge \mu(G)/2-(1-\alpha)\mu(G^*)\\
	&\ge \mu(G)(1/2-\ZK(1-\alpha))>0,
	\end{align}
	and so \e{a17} follows. Denote
	\begin{equation*}
\Delta_k=\bigcup_{G\in \cup_{j\ge k}\ZU_j} G,\quad k=0,1,\ldots .
	\end{equation*}
	Observe that $\{\Delta_k\}$ forms a decreasing sequence of measurable sets. Moreover, form \e{a21} and from the structure of $\ZU$ it follows that
	\begin{align}\label{a16}
\mu(\Delta_k)\lesssim 4^{-k}\cdot \mu(B),\,k=1,2,\ldots,\quad B\subset\bigcup_{k\ge 0} \Delta_k\text { a.e.}.
	\end{align}
Thus for almost all $x\in B$ we have $x\in \Delta_{n-1}\setminus \Delta_{n}$ for some $n\ge 1$. So one can find a chain of balls $B_0=B, B_1,\ldots, B_{n-1}$ such that $B_{j}\in \ch(B_{j-1})$ and $x\in E_{B_n}$.  According to \e{a17} there are $\xi_j\in E_{B_{j-1}}\cap E_{B_{j}}$, $j=1,2,\ldots, n-1$. Set also $\xi_n=x$. Since $\xi_{j},\xi_{j+1}\in E_{B_j}$, we have 
\begin{equation}\label{a28}
|f(\xi_{j})-f(\xi_{j+1})|\le 2\OSC_{B_j^*,\alpha}(f),\quad j=1,2,\ldots, n-1.
\end{equation} 
In addition, we have $\mu(E_{B_0})\ge \alpha\mu(B_0)\ge \mu(B)/2$ and $\xi_1\in E_{B_0}$, and so by \lem{L7} we get
\begin{equation}\label{a49}
|f(\xi_{1})-m_f(B_0)|\le\OSC_{E_0}(f)\le  2\OSC_{B_0^*,\alpha}(f).
\end{equation} 
Observe that $B_{k+1}^*\subset B_k^*$, since according to \e{a21} we have 
\begin{equation}
\mu(B_{k+1}^*)\le \ZK\mu(B_{k+1})\le \mu(B_k)/4\le  \mu(B_k).
\end{equation}
Hence, applying \lem {L6} along with \e{01}, \e{a28} and \e{a49}, we obtain
\begin{align*}
|f(x)-m_f(B)|&=|f(\xi_n)-m_f(B_0)|\\
&=|f(\xi_1)-m_f(B_0)|+\sum_{j=1}^{n-1}|f(\xi_j)-f(\xi_{j+1})|\\
&\le 2\sum_{j=0}^{n-1} \OSC_{B_j^*,\alpha}(f)\\
&\le 2n\beta(\alpha)\cdot |g(x)|.
\end{align*}
Finally, using \e{a16}, we get
\begin{equation*}
\mu\{x\in B:\,|f(x)-m_f(B)|>2n\beta(\alpha)|g(x)|\}\le \mu(\Delta_{n})\lesssim 4^{-n}\mu(B),
\end{equation*}
that completes the proof of theorem.
\end{proof}

\section{Estimates of sharp maximal operators }\label{S4}
Let $1\le r<\infty$ be fixed. For any function $f\in L^r(X)$ and a ball $B\in\ZB$ we set 
\begin{align*}
\langle f\rangle_{B}=\left(\frac{1}{\mu(B)}\int_{B}|f|^r\right)^{1/r},\quad \langle f\rangle^*_{B}=\sup_{A\in \ZB:A\supseteq B}\langle f\rangle_{A}.
\end{align*}
We will consider also the $\#$-analogues of this quantities defined by
\begin{align}\label{a32}
\langle f\rangle_{\#,B}=\left(\frac{1}{\mu(B)}\int_{B}|f-f_B|^r\right)^{1/r},\quad \langle f\rangle^*_{\#,B}=\sup_{A\in \ZB:A\supseteq B}\langle f\rangle_{\#,A},
\end{align}
where $f_B=\frac{1}{\mu(B)}\int_Bf$. Recall the definitions of maximal and $(\#)$-maximal functions
\begin{align}\label{b6}
\MM f(x)=\sup_{B\in \ZB}\langle f\rangle_{B},\quad  \MM_{\#}f(x)=\sup_{B\in \ZB:\, B\ni x}\langle f\rangle_{\#,B}.
\end{align} 
Observe the following standard properties of quantities \e{a32}. If $f\in L^r(X)$ and $B$ is an arbitrary ball, then 
\begin{equation}\label{a33}
\langle f\rangle_{\#,B}\le \langle f-c\rangle_B+|f_B-c|\le 2\langle f-c\rangle_B,\quad c\in \ZR,
\end{equation}
\begin{align}\label{a35}
\langle f\rangle_{\#,B}\le 2\langle f-f_{B^*}\rangle_{B}\le 2\left(\frac{1}{\mu(B)}\int_{B^*}|f-f_{B^*}|^r\right)^{1/r}\lesssim \langle f\rangle_{\#,B^*},
\end{align}
\begin{align}\label{a34}
|f_B-f_{B^*}|&\le \langle f-f_{B^*}\rangle_{B}\lesssim \langle f\rangle_{\#,B^*}.
\end{align}
One can also check that $\MM_{\#}f(x)\le 2\MM f(x)$. The following theorem shows that this bound is somewhat convertible.
\begin{theorem}\label{T2}
	If $(X,\ZM,\mu)$ is a measure space with an arbitrary ball-basis $\ZB$, then for any $1\le r<\infty$ the maximal operator $\MM$ is strongly dominated by the operator $\MM_{\#}$. Moreover, we have a bound
	\begin{equation}\label{2}
	\OSC_{B,\alpha}(\MM f)\lesssim (1-\alpha)^{-1/r}\langle f\rangle^*_{\#,B},\quad B\in \ZB,
	\end{equation}
	valid for any $0<\alpha<1$.
\end{theorem}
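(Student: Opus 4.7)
The plan is to prove the quantitative oscillation bound \e{2} directly; strong domination then comes for free, because for any $x\in B$ every ball $A\supseteq B$ contains $x$, so
\begin{equation*}
\MM_{\#}f(x)\;\ge\;\sup_{A\in\ZB:\,A\supseteq B}\langle f\rangle_{\#,A}\;=\;\langle f\rangle^*_{\#,B},
\end{equation*}
which gives $\langle f\rangle^*_{\#,B}\le\INF_B(\MM_\# f)$, and \e{2} then rewrites as the strong domination condition \e{01} with $\beta(\alpha)=c(1-\alpha)^{-1/r}$.

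For \e{2}, the core idea is a two-scale decomposition of the supremum defining $\MM f(x)$ at points $x\in B$. Any ball $A\ni x$ falls into one of two regimes. If $\mu(A)\le 2\mu(B)$, then $A\cap B\ni x$ combined with B4) applied to $B$ forces $A\subset B^*$, so
\begin{equation*}
\langle f\rangle_A\;\le\;\langle f-f_{B^*}\rangle_A+|f_{B^*}|\;\le\;\MM\!\bigl((f-f_{B^*})\chi_{B^*}\bigr)(x)+|f_{B^*}|.
\end{equation*}
If instead $\mu(A)>2\mu(B)$, then B4) applied to $A$ gives $B\subset A^*$, and from $A\subset A^*$ with $\mu(A^*)\le\ZK\mu(A)$ together with $A^*\supseteq B$ one obtains
\begin{equation*}
\langle f\rangle_A\;\le\;\ZK^{1/r}\langle f\rangle_{\#,A^*}+|f_{A^*}|\;\le\;\ZK^{1/r}\langle f\rangle^*_{\#,B}+|f_{A^*}|.
\end{equation*}

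The crucial step is to absorb the stray averages $|f_{B^*}|$ and $|f_{A^*}|$ into $\INF_B(\MM f)$. For any ball $C\supseteq B$, Jensen's inequality (which uses $r\ge 1$) gives $|f_C|\le\langle f\rangle_C$, and for any $y\in B\subset C$ one has $\MM f(y)\ge\langle f\rangle_C$, so $|f_C|\le\INF_B(\MM f)$. Applying this to $C=B^*$ and to each $C=A^*$ that arises in the global case, and then taking the supremum over all $A\ni x$, one arrives at the pointwise bound
\begin{equation*}
\MM f(x)-\INF_B(\MM f)\;\le\;\MM\!\bigl((f-f_{B^*})\chi_{B^*}\bigr)(x)+\ZK^{1/r}\langle f\rangle^*_{\#,B},\qquad x\in B.
\end{equation*}

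To finish, one controls the first term in measure. The operator $\MM$ is of weak type $(r,r)$ in this abstract setting by a standard disjointification argument built on the Besicovitch-type \lem{L1}, and $\|(f-f_{B^*})\chi_{B^*}\|_{L^r}^r=\mu(B^*)\langle f\rangle^r_{\#,B^*}\lesssim\mu(B)(\langle f\rangle^*_{\#,B})^r$. Choosing the threshold $t\asymp(1-\alpha)^{-1/r}\langle f\rangle^*_{\#,B}$ then carves out a set $E\subset B$ with $\mu(E)\ge\alpha\mu(B)$ on which $\MM((f-f_{B^*})\chi_{B^*})\le t$. Combined with the displayed bound and the a.e.\ inequality $\MM f\ge\INF_B(\MM f)$ on $B$, this shows $0\le\MM f(x)-\INF_B(\MM f)\lesssim(1-\alpha)^{-1/r}\langle f\rangle^*_{\#,B}$ on $E$ (after removing a null set), whence $\OSC_E(\MM f)\lesssim(1-\alpha)^{-1/r}\langle f\rangle^*_{\#,B}$ and \e{2} follows. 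The main obstacle is exactly the global-ball contribution: one cannot meaningfully compare $|f_{A^*}|$ across different $A$ on its own, and it is the absorption of both $|f_{B^*}|$ and $|f_{A^*}|$ into the single quantity $\INF_B(\MM f)$, made possible by $B\subset A^*$, that lets the two regimes merge into one additive bound which then cancels on passing to oscillations.
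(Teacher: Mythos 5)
Your proposal is correct and follows essentially the same route as the paper's proof: localize via $g=(f-f_{B^*})\cdot\ZI_{B^*}$, use the weak-$L^r$ bound of $\MM$ to carve out a subset $E\subset B$ with $\mu(E)\ge\alpha\mu(B)$ on which $\MM g\lesssim(1-\alpha)^{-1/r}\langle f\rangle^*_{\#,B}$, and split the balls $A\ni x$ in the supremum defining $\MM f$ into those contained in $B^*$ and those with $B\subset A^*$, the latter controlled by $\langle f\rangle^*_{\#,B}$. The only real difference is in the bookkeeping at the end: the paper compares $\MM f$ at two points of $E$ (using $\langle f\rangle_{A^*}$ or $\langle f\rangle_B$ as a lower bound for the second point), while you absorb the stray averages $|f_{B^*}|,|f_{A^*}|$ into $\INF_B(\MM f)$ and invoke $\MM f\ge\INF_B(\MM f)$ a.e.\ on $B$, which yields the same oscillation bound.
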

The following proposition shows that on the right side of \e{2} we can equivalently use the quantity $\INF_{B}(\MM_{\#}(f))$.
\begin{proposition}\label{P1}
	Let $\ZB$ be a ball-basis in a measure space $(X,\ZM,\mu)$. For any ball $B\in \ZB$ and a function $f\in L^r(X)$ it holds the inequality
	\begin{equation}\label{a12}
	\langle f\rangle^*_{\#,B}\le\INF_{B}(\MM_{\#}(f))\lesssim \langle f\rangle^*_{\#,B}.
	\end{equation}
\end{proposition}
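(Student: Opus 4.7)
The plan is to establish the two inequalities in \e{a12} separately. The lower bound is immediate from definitions: for any $y\in B$ and any $A\in \ZB$ with $A\supseteq B$ one has $y\in A$, hence $\MM_\# f(y)\ge \langle f\rangle_{\#,A}$. Taking the supremum over $A\supseteq B$ yields $\MM_\# f(y)\ge \langle f\rangle^*_{\#,B}$ pointwise on $B$, which immediately gives $\INF_{B}(\MM_\# f)\ge \langle f\rangle^*_{\#,B}$.

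For the upper bound I would fix $y\in B$ and split the supremum defining $\MM_\# f(y)$ according to the size of the competing ball $A\ni y$. Since $A\cap B\neq\varnothing$, property B4) applies in one of two ways. If $\mu(A)\ge \mu(B)/2$, then $\mu(B)\le 2\mu(A)$ forces $B\subset A^*$; in particular $A^*\supseteq B$, so \e{a35} together with the definition of $\langle f\rangle^*_{\#,B}$ gives $\langle f\rangle_{\#,A}\lesssim \langle f\rangle_{\#,A^*}\le \langle f\rangle^*_{\#,B}$. If instead $\mu(A)<\mu(B)/2$, then $\mu(A)\le 2\mu(B)$ yields $A\subset B^*$, and \e{a33} with $c=f_{B^*}$ produces
\begin{equation*}
\langle f\rangle_{\#,A}\le 2\left(\frac{1}{\mu(A)}\int_A h\right)^{1/r},\qquad h:=|f-f_{B^*}|^r\chi_{B^*}\in L^1(X).
\end{equation*}

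To close the argument I would apply the weak-$(1,1)$ estimate for the $L^1$-averaged maximal function of $h$, which in the abstract ball-basis setting follows from \lem{L1} via the standard Vitali-type argument. Since $\|h\|_1=\mu(B^*)\langle f\rangle_{\#,B^*}^r\lesssim \mu(B)\langle f\rangle^{*\,r}_{\#,B}$, this gives
\begin{equation*}
\mu\left\{y\in X:\,\sup_{A\in\ZB,\,A\ni y}\frac{1}{\mu(A)}\int_A h>s\right\}\lesssim \frac{\mu(B)\langle f\rangle^{*\,r}_{\#,B}}{s}.
\end{equation*}
Choosing $s=C\langle f\rangle^{*\,r}_{\#,B}$ for a sufficiently large admissible constant $C$ makes this measure smaller than $\mu(B)/2$. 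Hence on a subset of $B$ of measure at least $\mu(B)/2$ the small-ball contribution also satisfies $\langle f\rangle_{\#,A}\lesssim \langle f\rangle^*_{\#,B}$; combining with the large-ball case gives $\MM_\# f(y)\lesssim \langle f\rangle^*_{\#,B}$ on that subset, which forces $\INF_{B}(\MM_\# f)\lesssim \langle f\rangle^*_{\#,B}$.

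The main obstacle is the small-ball regime: when $\mu(A)\ll \mu(B)$, no genuine pointwise comparison between $\langle f\rangle_{\#,A}$ and averages on $B^*$ is available, since arbitrarily small balls around $y$ may probe local behaviour of $f$ that no macroscopic quantity controls. The resolution is to trade pointwise control for control in measure, which is enough precisely because the right-hand side of \e{a12} is an essential infimum rather than a supremum.
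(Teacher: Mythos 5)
Your proof is correct, but it follows a genuinely different route from the paper's. The paper argues in the contrapositive direction: setting $\lambda=\INF_B(\MM_\# f)/2$, it picks for (almost) every $x\in B$ a ball $B(x)\ni x$ with $\langle f\rangle_{\#,B(x)}>\lambda$, extracts pairwise disjoint balls $B_k$ with $B\subset\cup_k B_k^*$ via the Besicovitch-type Lemma \ref{L1}, and then either some $B_k$ is larger than $B$ (so $B\subset B_k^*$ and \e{a35} finishes) or all $B_k\subset B^*$, in which case disjointness lets one sum $\int_{B_k}|f-f_{B^*}|^r\gtrsim\mu(B_k)\lambda^r$ inside $B^*$ and conclude $\langle f\rangle_{\#,B^*}\gtrsim\lambda$. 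You instead bound $\MM_\# f$ pointwise off an exceptional set: large balls ($\mu(A)\ge\mu(B)/2$) are handled exactly as in the paper's first case via $B\subset A^*$ and \e{a35}, while small balls are controlled by the weak-$L^r$ bound of $\MM$ applied to $(f-f_{B^*})\ZI_{B^*}$, which is precisely the device the paper uses in its proof of Theorem \ref{T2} (citing \cite{Kar1}), not in its proof of Proposition \ref{P1}. The paper's argument buys self-containedness — only Lemma \ref{L1} is needed, no weak-type inequality — whereas yours buys a slightly stronger conclusion: you get $\MM_\# f\lesssim\langle f\rangle^*_{\#,B}$ on a subset of $B$ of measure at least $\mu(B)/2$ (and, tuning the threshold, on a set of measure $(1-\alpha)$-close to full with constant $(1-\alpha)^{-1/r}$), not merely a bound on the essential infimum. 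Two small points to tidy: the weak-type estimate derived from Lemma \ref{L1} should be applied to the bounded set $\{y\in B:\sup_A\mu(A)^{-1}\int_A h>s\}$ (Lemma \ref{L1} requires boundedness), or simply quoted from \cite{Kar1} as the paper does; and the degenerate case $\langle f\rangle^*_{\#,B}=0$, where your choice of $s$ vanishes, should be dismissed separately (then $f$ is a.e.\ constant on every ball containing $B$ and both sides are $0$).
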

\begin{proof}
	The proof of the left hand side of the inequality is straightforward. Let us prove the right hand side. For any $x\in B$ there exists a ball $B(x)\ni x$ such that
	\begin{equation}\label{a36}
	\langle f\rangle_{\#,B(x)}>\INF_{B}(\MM_{\#}(f))/2=\lambda.
	\end{equation}
	Applying \lem{L1}, we find a sequence of pairwise disjoint balls $\{B_k\}\subset \{B(x):\, x\in B\}$ such that $\cup_kB_k^*\supset B$. If some $B_k$ satisfies $\mu(B_k)>\mu(B)$, then we have $B\subset B_k^*$ and, using \e{a35}, we get
	\begin{align*}
	\langle f\rangle^*_{\#,B}\ge \langle f\rangle_{\#,B_k^*}&\gtrsim\langle f\rangle_{\#,B_k}>\lambda/2.
	\end{align*}
	If $\mu(B_k)\le \mu(B)$ for every $k$, then  $\cup_kB_k\subset B^*$. Therefore by \e{a33}, \e{a36} and the pairwise disjointness of $B_k$ we obtain
	\begin{align*}
	\langle f\rangle^*_{\#,B}&\ge \langle f\rangle_{\#,B^*}\ge \left(\frac{1}{\mu(B^*)}\sum_k\int_{B_k}|f-f_{B^*}|^r\right)^{1/r}\\
	&\ge \frac{1}{2}\left(\frac{1}{\mu(B^*)}\sum_k\int_{B_k}|f-f_{B_k}|^r\right)^{1/r}\\
	&=\frac{1}{2}\left(\frac{1}{\mu(B^*)}\sum_k\mu(B_k)(\langle f\rangle_{\#,B_k})^r\right)^{1/r}\\
	&\ge \frac{\lambda}{2}\left(\frac{1}{\mu(B^*)}\sum_k\mu(B_k)\right)^{1/r}\\
	&\gtrsim\lambda\left(\frac{1}{\mu(B^*)}\sum_k\mu(B_k^*)\right)^{1/r}\ge \lambda.
	\end{align*}
\end{proof}
\begin{proof}[Proof of \trm{T2}]
	Let $f\in L^r(X)$ be a nontrivial function and $B$ be an arbitrary ball. Set $g=(f-f_B)\cdot \ZI_{B^*}$ and  $E_{B,\lambda}=\{y\in B:\, \MM g(y)\le \lambda\}$. According to the weak-$L^r$ bound of the maximal function $\MM$ (see \cite{Kar1}) we have
	\begin{equation}
	\mu(B\setminus E_{B,\lambda})=\mu\{y\in B:\, \MM g(y)>\lambda\}\lesssim  \frac{1}{\lambda^r}\cdot \int_{B^*}|g|^r.
	\end{equation}
	So for an appropriate number $\lambda\sim (1-\alpha)^{-1/r}\langle g\rangle_{B^*}$	we have $\mu(B\setminus E_{B,\lambda})< (1-\alpha)\mu(B)$ and therefore, $\mu(E_{B,\lambda})>\alpha\mu(B)$. Hence, applying \e{a34},  for the set $E=E_{B,\lambda}\subset B$ we get the relations 
	\begin{align}
	&\mu(E)>\alpha \mu(B),\label{b2}\\
	&\MM g(y)\lesssim (1-\alpha)^{-1/r}\langle g\rangle_{B^*}=(1-\alpha)^{-1/r}\langle f-f_B\rangle_{B^*}\label{b1}\\
	&\qquad\quad\, \le (1-\alpha)^{-1/r}\left(\langle f\rangle_{\#,B^*}+|f_B-f_{B^*}|\right)\\
	&\qquad\quad\, \lesssim (1-\alpha)^{-1/r}\langle f\rangle^*_{\#,B},\quad y\in E.
	\end{align}
Take arbitrary points $x,x'\in E$. Without loss of generality we can suppose that $\MM f(x)\ge \MM f(x')$. For any $\delta >0$ there is a ball $A\ni x$ such that
	\begin{equation}
	\MM f(x)\le  \langle f\rangle_{A}+\delta.
	\end{equation}
If $\mu(A)>\mu(B)$, then $x'\in B\subset A^*$ and we have 
\begin{align}\label{a10}
\MM f(x)-\MM f(x')&\le \langle f\rangle_{A}-\langle f\rangle_{A^*}+\delta\\
&\le \langle f-f_{A^*}\rangle_{A}+|f_{A^*}|+\langle f-f_{A^*}\rangle_{A^*}-|f_{A^*}|+\delta\\
&\lesssim  \langle f-f_{A^*}\rangle_{A^*}+\langle f-f_{A^*}\rangle_{A^*}+\delta\\
&\lesssim \langle f\rangle^*_{\#,B}+\delta.
\end{align}
If $\mu(A)\le \mu(B)$, then $A\subset B^*$. Thus, using \e{b1}, we obtain
\begin{align}\label{a11}
\MM f(x)-\MM f(x')&\le \langle f\rangle_{A}-\langle f\rangle_{B}+\delta\\
&\le \langle f-f_B\rangle_{A}+|f_B|+\langle f-f_B\rangle_{B}-|f_B|+\delta\\
&=\langle g\rangle_{A}+\langle f-f_B\rangle_{B}+\delta\\
&\le \MM g(x)+\langle f\rangle^*_{\#,B}+\delta\\
&\lesssim (1-\alpha)^{-1/r}\langle f\rangle^*_{\#,B}+\langle f\rangle^*_{\#,B}+\delta\\
&\lesssim  (1-\alpha)^{-1/r}\langle f\rangle^*_{\#,B}+\delta.
\end{align}
Since $\delta$ can be arbitrary small, from \e{a10} and \e{a11} we conclude
\begin{equation*}
|\MM f(x)-\MM f(x')|\lesssim (1-\alpha)^{-1/r}\langle f\rangle^*_{\#,B}, \quad x,x'\in E.
\end{equation*}
This implies
\begin{equation}\label{b3}
\OSC_E(Mf)\lesssim (1-\alpha)^{-1/r}\langle f\rangle^*_{\#,B}. 
\end{equation}
Combining \e{b2} and  \e{b3} we deduce \e{2} so the theorem is proved.
\end{proof}

\begin{corollary}\label{C4}
	Let $(X,\ZM,\mu)$ be a measure space with a doubling ball-basis $\ZB$ and $\mu(X)=\infty$. Then for any functions $f\in L^r(X)$, $1\le r<\infty$, and $\varepsilon>0$ we have 
	\begin{align}\label{a38}
	\mu\{x\in X:\,\MM f(x)>2\lambda,\, &\MM_{\#}f(x)\le \varepsilon \lambda \}\\
	&\lesssim \varepsilon^r\cdot\mu\{x\in X:\, \MM f(x)>\lambda\},\, \lambda>0.
	\end{align}
\end{corollary}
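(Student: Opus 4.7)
The plan is to read \e{a38} off from Theorem~\ref{T1} applied to the couple $(\MM f, \MM_{\#}f)$ with the weight $w = \mu$. The hypothesis \e{a18} is trivial for $w = \mu$ with constants $\gamma = \delta = 1$, so the only substantive content to supply is the weak domination condition \e{00}.

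That weak domination is produced by combining \trm{T2} with the right-hand inequality of \pro{P1}: for every ball $B$,
\[
\OSC_{B,\alpha}(\MM f) \lesssim (1-\alpha)^{-1/r}\langle f\rangle^{*}_{\#,B} \lesssim (1-\alpha)^{-1/r}\INF_{B}(\MM_{\#}f),
\]
and since $\INF_{B}(\MM_{\#}f) \le \INF_{B,1-\alpha}(\MM_{\#}f)$ (the essential infimum over a smaller set can only grow), the couple $(\MM f, \MM_{\#}f)$ satisfies \e{00} with $\beta(\alpha) = C(1-\alpha)^{-1/r}$ for some admissible constant $C > 0$.

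Given $\varepsilon > 0$, I want the threshold $\lambda/\beta$ appearing in the conclusion of \trm{T1} to equal $\varepsilon\lambda$, so I set $\beta = 1/\varepsilon$ and solve $C(1-\alpha)^{-1/r} = 1/\varepsilon$, obtaining $1-\alpha = (C\varepsilon)^{r}$. If this value lies in $(0,1)$, \trm{T1} directly yields
\[
\mu\{\MM f > 2\lambda,\ \MM_{\#}f \le \varepsilon\lambda\} \lesssim (1-\alpha)\,\mu\{\MM f > \lambda\} \lesssim \varepsilon^{r}\,\mu\{\MM f > \lambda\},
\]
which is exactly \e{a38}. In the complementary range $\varepsilon \ge 1/C$, the factor $\varepsilon^{r}$ is bounded below by an admissible constant and \e{a38} follows from the trivial inclusion $\{\MM f > 2\lambda\}\subset \{\MM f > \lambda\}$.

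Since every ingredient is already in hand, I do not foresee a substantive obstacle; the only care needed is tracking the constant $\beta(\alpha)$ supplied by \trm{T2} and inverting it to fit the parameter $\varepsilon$ in \e{a38}, which is purely bookkeeping.
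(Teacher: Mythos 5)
Your argument is correct and is essentially the paper's own proof: combine \trm{T2} with \pro{P1} to get $\OSC_{B,\alpha}(\MM f)\lesssim (1-\alpha)^{-1/r}\INF_{B}(\MM_{\#}f)\le (1-\alpha)^{-1/r}\INF_{B,1-\alpha}(\MM_{\#}f)$, then apply \trm{T1} with $w=\mu$ and $\beta\sim(1-\alpha)^{-1/r}$, setting $\varepsilon=1/\beta$ so that $1-\alpha\sim\varepsilon^{r}$. The only slip is cosmetic: the inequality $\langle f\rangle^{*}_{\#,B}\le \INF_{B}(\MM_{\#}f)$ you invoke is the left-hand, not the right-hand, part of \e{a12}; your explicit treatment of the trivial range of large $\varepsilon$ is fine and only adds care the paper leaves implicit.
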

\begin{proof}
	From \e{2} and \e{a12} it follows that 
	\begin{align}
		\OSC_{B,\alpha}(\MM f)&\lesssim  (1-\alpha)^{-1/r}\cdot \INF_{B}(\MM_{\#}(f))\\
		&\le (1-\alpha)^{-1/r}\cdot \INF_{B,1-\alpha}(\MM_{\#}(f))
	\end{align}
	and so we can apply \trm{T1} with $\beta\sim (1-\alpha)^{-1/r}$. Then the notation $\varepsilon =1/\beta$ will give us the inequality \e{a38}.
\end{proof}
Combining \trm{T3} and \trm{T2}, we can prove the following.
\begin{corollary}\label{C5}
	Let $(X,\ZM, \mu)$ be a measure space with a doubling ball-basis. For any $f\in L^r(X)$ and a ball $B$ it holds the inequality
	\begin{equation}\label{a40}
	\mu\{x\in B:\,|\MM f(x)-c_{B,f}|>t |\MM_{\#}f(x)|\}\lesssim \exp (-c\cdot t)\cdot \mu(B),\quad t>0,
	\end{equation}
	where $c_{B,f}$ is a median of function $\MM f$ over $B$.
\end{corollary}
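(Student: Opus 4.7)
The plan is to obtain \e{a40} as a direct consequence of \trm{T3} applied to the couple $(\MM f, \MM_{\#} f)$, after verifying that these two functions satisfy the strong domination condition \e{01}. The proof structure should mirror that of \cor{C4}, but invoking \trm{T3} in place of \trm{T1}.

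First I would combine \trm{T2} with \pro{P1}. The bound \e{2} gives $\OSC_{B,\alpha}(\MM f) \lesssim (1-\alpha)^{-1/r}\langle f\rangle^*_{\#,B}$, and \e{a12} allows me to replace $\langle f\rangle^*_{\#,B}$ by $\INF_B(\MM_{\#} f)$ up to an admissible constant. Chaining these two estimates, for every ball $B \in \ZB$ and every $0 < \alpha < 1$ I obtain
\begin{equation*}
\OSC_{B,\alpha}(\MM f) \lesssim (1-\alpha)^{-1/r}\, \INF_B(\MM_{\#} f),
\end{equation*}
which is precisely the strong domination condition \e{01} with $\beta = \beta(\alpha) = c\,(1-\alpha)^{-1/r}$. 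At this point I would fix any admissible value of $\alpha$ (e.g.\ $\alpha = 3/4$) so that $\beta$ becomes an absolute admissible constant.

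Having established strong domination, I would apply \trm{T3} to the pair $(\MM f, \MM_{\#} f)$. Taking $c_{B,f} = m_{\MM f}(B)$ to be a median of $\MM f$ over $B$, \e{6} yields
\begin{equation*}
\mu\{x \in B : |\MM f(x) - c_{B,f}| > t\, |\MM_{\#} f(x)|\} \lesssim \exp(-c\cdot t)\,\mu(B), \quad t > 0,
\end{equation*}
which is exactly \e{a40}.

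There is essentially no technical obstacle here: the corollary is a clean composition of \trm{T2}, \pro{P1}, and \trm{T3}. The only point that warrants attention is that \trm{T2} delivers strong domination in the form with the outer-ball functional $\langle f\rangle^*_{\#,B}$ rather than the $\INF_B$ required by the definition \e{01}; this mismatch is resolved by \pro{P1}, which shows the two quantities are equivalent up to admissible constants. Once that substitution is made, the hypotheses of \trm{T3} are literally satisfied and its conclusion is literally the claim.
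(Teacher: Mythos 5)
Your proposal is correct and follows exactly the route the paper intends: the paper states \cor{C5} as a combination of \trm{T2} and \trm{T3}, and your use of \pro{P1} (inequality \e{a12}) to convert the bound \e{2} into the strong domination condition \e{01} for the couple $(\MM f,\MM_{\#}f)$ is the same mechanism already used in \cor{C4}. No gaps; the argument is the paper's own, spelled out.
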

Along with operators \e{b6} we will consider another maximal operator that was introduced by Jawerth and Torchinsky \cite{JaTo}. That is the local maximal sharp function operator 
\begin{align}\label{a37}
\MM_{\#,\alpha}f(x)=\sup_{B\in \ZB:\, B\ni x}\OSC_{B,\alpha}(f),\quad 0<\alpha<1.
\end{align}
The obvious inequality
\begin{equation}
\OSC_{B,\alpha}(f)\le \INF_{B}\left(\MM_{\#,\alpha}(f)\right)\label{b5}
\end{equation}
yields a strong domination of any function $f\in L^r(X)$ by $\MM_{\#,\alpha}(f)$. So, applying \trm{T3}, we immediately get the following exponential estimate, which is an extension of John-Nirenberg's inequality.
\begin{corollary}\label{C6}
	Let $(X,\ZM, \mu)$ be a measure space with a doubling ball-basis. For any $f\in L^r(X)$ and a ball $B$ it holds the inequality
	\begin{equation}\label{a39}
	\mu\{x\in B:\,|f(x)-m_f(B)|>t \cdot \MM_{\#,\alpha}f(x)\}\lesssim \exp (-c\cdot t)\cdot \mu(B),\quad t >0.
	\end{equation}
\end{corollary}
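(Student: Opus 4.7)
The plan is to observe that this corollary is a direct consequence of \trm{T3} once one verifies that $\MM_{\#,\alpha}(f)$ strongly dominates $f$, a fact the authors already flag via the elementary bound (\ref{b5}). Thus the entire task reduces to checking the hypothesis (\ref{01}) of the strong domination definition and invoking \trm{T3} with $g=\MM_{\#,\alpha}(f)$.

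To verify strong domination, I would unpack definition (\ref{a37}) of $\MM_{\#,\alpha}$: for any ball $B$ and any $x\in B$ one has
\begin{equation*}
\MM_{\#,\alpha}f(x)=\sup_{A\in\ZB:\,A\ni x}\OSC_{A,\alpha}(f)\ge \OSC_{B,\alpha}(f),
\end{equation*}
because $B$ itself is admissible in the supremum. Passing to the essential infimum over $x\in B$ yields $\OSC_{B,\alpha}(f)\le \INF_{B}(\MM_{\#,\alpha}(f))$, which is exactly (\ref{b5}). Upgrading this non-strict bound by, say, a factor $\beta=2$ produces precisely the strong domination relation (\ref{01}) of $f$ by $\MM_{\#,\alpha}(f)$.

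At this point \trm{T3} applies verbatim to the couple $\bigl(f,\MM_{\#,\alpha}(f)\bigr)$ and outputs
\begin{equation*}
\mu\{x\in B:\,|f(x)-m_f(B)|>t\,\MM_{\#,\alpha}f(x)\}\lesssim \exp(-c\, t)\,\mu(B),\quad t>0,
\end{equation*}
which is exactly (\ref{a39}). Since all the heavy lifting was done inside the proof of \trm{T3}, there is no genuine obstacle; the only subtlety worth flagging is that the parameter $\alpha$ inside $\MM_{\#,\alpha}$ matches (or may be aligned with) the $\alpha$ used in the covering construction of \trm{T3}, so that the oscillation functional appearing on the left of (\ref{b5}) is the same one controlled in the strong-domination step. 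The pay-off is that (\ref{a39}) extends John--Nirenberg's exponential inequality to the abstract doubling ball-basis setting, with $\MM_{\#,\alpha}f$ playing the role of a pointwise local $\BO$-type control.
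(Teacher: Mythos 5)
Your proposal is correct and takes essentially the same route as the paper, which likewise proves \e{a39} in one stroke: the elementary bound \e{b5} is taken to give strong domination of $f$ by $\MM_{\#,\alpha}(f)$, and then \trm{T3} is applied to the couple $\bigl(f,\MM_{\#,\alpha}(f)\bigr)$. The parameter-alignment point you flag (matching the $\alpha$ in $\MM_{\#,\alpha}$ with the $\alpha$ used in the construction inside the proof of \trm{T3}) is precisely the only subtlety, and the paper treats it as immediate in the same way.
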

This inequality is the extension of an analogous inequalities of papers \cite{Per}, \cite{Kar2} to general ball-bases. Namely, Ortiz-Caraballo, P\'{e}rez and Rela  \cite{Per} proved the same inequality \e{a39} in $\ZR^n$ equipped with Euclidean balls. Observe that
\begin{align}
\alpha\cdot \MM_{\#,\alpha}f(x)\le\MM_{\#}f(x)\le 2\MM  f(x),\quad |f(x)|\le \MM f(x)\text { a.e.},\label{b4}
\end{align}
where the last inequality follows from the density property. Focusing on these bounds one can see a difference between inequalities \e{a40} and \e{a39}. 
\section{Bounded oscillation operators}\label{S5}
Let $1\le r<\infty$, $(X,\ZM,\mu)$ be a measure space and $L^0(X)$ be the linear space of real functions on $X$. An operator $
T:L^r(X)\to L^0(X) $ is said to be subadditive if
\begin{align*}
&	|T(\lambda\cdot f)(x)|=|\lambda|\cdot |Tf(x)|,\quad  \lambda\in\ZR,\\
&	|T(f+g)(x)|\le |Tf(x)|+|Tg(x)|.
\end{align*}
Recall the definition of bounded oscillation ($\BO$) operators from \cite{Kar1}.
\begin{definition}
	Let $(X,\ZM,\mu)$ be a measure space with a doubling ball-basis $\ZB$. We say that a subadditive operator $T:L^r(X)\to L^0(X)$ is a bounded oscillation operator with respect to $\ZB$ if we have the bound
	\begin{equation}\label{d7}
	\sup_{f\in L^r(X),\, B\in \ZB }\frac{\OSC_{B}(T(f\cdot \ZI_{X\setminus B^*}))}{\langle f\rangle^*_{B}}=\ZL(T)<\infty
	\end{equation}
	called \textit{localization} property. The family of all bounded oscillation operators with respect to a ball-basis $\ZB$ will be denoted by $\BO_\ZB$ or simply $\BO$.
\end{definition}
In fact, the paper \cite{Kar1} gives the definition of $\BO$ operators in the setting of general ball-bases without the doubling condition. In such a general definition along with \e{d7} so called the \textit{connectivity} property was assumed. It was proved in \cite{Kar1} that if a ball-basis is doubling, then the \textit{localization} property implies the \textit{connectivity}. It was also established that the class of $\BO$ operators involves the Calder\'on-Zygmund operators on general homogeneous spaces and their truncations, the maximal function, martingale transforms (nondoubling  case) as well as the Carleson type operators. The paper recovers many standard estimates of classical operators for general $\BO$ operators. Those include some sharp weighted norm estimates that were recently investigated  in series of papers. 
\begin{proposition}
	Let $\ZB$ be a ball-basis satisfying the doubling property. If a $\BO_\ZB$ operator $T$ satisfies the weak-$L^r$ inequality, then 
	\begin{equation}\label{5}
	\OSC_{B,\alpha}(|Tf|)\lesssim c\cdot \langle f\rangle^*_{B},
	\end{equation}
	where $c=\ZL(T)+ (1-\alpha)^{-1/r}\cdot \|T\|_{L^r\to L^{r,\infty}}$.
\end{proposition}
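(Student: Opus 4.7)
The plan is to mimic the proof of \trm{T2}: decompose $f$ into a local piece supported on $B^*$ and a far piece supported on $X\setminus B^*$, control the local piece via the weak-$L^r$ bound on a large subset $E\subset B$, and control the far piece \emph{uniformly on the whole $B$} via the localization property \e{d7}. Concretely, set $g=f\cdot \ZI_{B^*}$ and $h=f\cdot \ZI_{X\setminus B^*}$, so $f=g+h$. Using subadditivity of $T$ twice, one gets the pointwise reverse-triangle type bound
\begin{equation*}
\bigl||Tf(x)|-|Th(x)|\bigr|\le |Tg(x)|,
\end{equation*}
and hence for any $x,x'\in B$,
\begin{equation*}
\bigl||Tf(x)|-|Tf(x')|\bigr|\le |Tg(x)|+|Tg(x')|+\OSC_B(Th).
\end{equation*}

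For the oscillation of $Th=T(f\cdot \ZI_{X\setminus B^*})$ the localization property \e{d7} gives directly $\OSC_B(Th)\le \ZL(T)\cdot \langle f\rangle^*_B$, with no exceptional set. For the two pointwise terms $|Tg|$ I would apply the weak-$L^r$ bound on the level set $\{y\in B:|Tg(y)|>\lambda\}$, choosing $\lambda$ so that this set has measure less than $(1-\alpha)\mu(B)$. The standard computation gives an admissible choice of the form
\begin{equation*}
\lambda\sim (1-\alpha)^{-1/r}\,\|T\|_{L^r\to L^{r,\infty}}\left(\frac{1}{\mu(B)}\int_{B^*}|f|^r\right)^{1/r}\lesssim (1-\alpha)^{-1/r}\,\|T\|_{L^r\to L^{r,\infty}}\,\langle f\rangle^*_B,
\end{equation*}
where I use $\mu(B^*)\le \ZK\mu(B)$ together with $\langle f\rangle_{B^*}\le \langle f\rangle^*_B$. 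Setting $E=\{y\in B:|Tg(y)|\le \lambda\}$, one has $\mu(E)>\alpha\mu(B)$ and $|Tg(x)|,|Tg(x')|\le \lambda$ for $x,x'\in E$.

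Combining the three pieces, for any $x,x'\in E$,
\begin{equation*}
\bigl||Tf(x)|-|Tf(x')|\bigr|\lesssim \bigl(\ZL(T)+(1-\alpha)^{-1/r}\|T\|_{L^r\to L^{r,\infty}}\bigr)\,\langle f\rangle^*_B,
\end{equation*}
so $\OSC_E(|Tf|)\lesssim c\,\langle f\rangle^*_B$ with $c$ as in the statement. Since $\mu(E)>\alpha\mu(B)$, by definition $\OSC_{B,\alpha}(|Tf|)\le \OSC_E(|Tf|)$, yielding \e{5}.

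The only subtle point, and the one I would double-check carefully, is the reverse-triangle inequality $\bigl||Tf|-|Th|\bigr|\le |Tg|$ — it uses both subadditivity and the homogeneity identity $|T(-g)|=|Tg|$, which are exactly the two ingredients in the definition of subadditive given just before the $\BO$ definition. Everything else (the choice of $\lambda$, the passage from $\langle f\rangle_{B^*}$ to $\langle f\rangle^*_B$ via the doubling of the basis, and the final comparison $\OSC_{B,\alpha}\le \OSC_E$) is routine given the tools already assembled in the paper.
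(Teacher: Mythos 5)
Your proposal is correct and takes essentially the same route as the paper's own proof: the same splitting $f=f\cdot\ZI_{B^*}+f\cdot\ZI_{X\setminus B^*}$, the same exceptional-set argument via the weak-$L^r$ bound with $\lambda\sim(1-\alpha)^{-1/r}\|T\|_{L^r\to L^{r,\infty}}\langle f\rangle_{B^*}$, and the same use of the localization property \e{d7} for the far piece. Your reverse-triangle step $\bigl||Tf|-|Th|\bigr|\le|Tg|$ is just a repackaging of the paper's chain of inequalities, which likewise rests on subadditivity together with $|T(-g)|=|Tg|$, so it is sound.
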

\begin{proof}
	Let $T$ be a $\BO$ operator. Given function $f\in L^r(X)$ and ball $B$ denote
	\begin{equation}
	E_{B,\lambda}=\{x\in B:\, |T(f\cdot \ZI_{B^*})(x)|\le \lambda\}.
	\end{equation}
	The weak-$L^r$ inequality of $T$ implies  
	\begin{equation*}
	\mu(B\setminus E_{B,\lambda})\le  \frac{\|T\|_{L^r\to L^{r,\infty}}^r}{\lambda^r}\cdot \int_{B^*}|f|^r.
	\end{equation*}
	Thus, for an appropriate number
	\begin{equation*}
	\lambda\sim (1-\alpha)^{-1/r}\cdot \|T\|_{L^r\to L^{r,\infty}}\cdot \langle f\rangle_{B^*}
	\end{equation*}
	and for $E=E_{B,\lambda}$, we have $\mu(E)>\alpha \mu(B)$ and 
	\begin{equation}
	|T(f\cdot \ZI_{B^*})(y)|\lesssim (1-\alpha)^{-1/r}\cdot \|T\|_{L^r\to L^{r,\infty}}\cdot \langle f\rangle_{B^*}, \, y\in E .
	\end{equation}
	Take $x,x'\in E\subset B$ and suppose that $|Tf(x)|\ge |Tf(x')|$. By the definition of $\BO$ operators we have 
	\begin{align*}
	|Tf(x)|-|Tf(x')|&\le |T(f\cdot \ZI_{X\setminus B^*})(x)|+|T(f\cdot \ZI_{B^*})(x)|\\
	&\qquad -|T(f\cdot \ZI_{X\setminus B^*})(x')|+|T(f\cdot \ZI_{B^*})(x')|\\
	&\lesssim |T(f\cdot \ZI_{X\setminus B^*})(x)-T(f\cdot \ZI_{X\setminus B^*})(x')|\\
	&\qquad + (1-\alpha)^{-1/r}\cdot \|T\|_{L^r\to L^{r,\infty}}\cdot \langle f\rangle_{B^*} \\
	&\le \ZL(T)\langle f\rangle_{B}^*+ (1-\alpha)^{-1/r}\cdot \|T\|_{L^r\to L^{r,\infty}}\cdot \langle f\rangle_{B^*}\\
	&\le  (\ZL(T)+ (1-\alpha)^{-1/r}\cdot \|T\|_{L^r\to L^{r,\infty}})\langle f\rangle_{B}^*.
	\end{align*} 
	Clearly all this imply \e{5}.
\end{proof}
\begin{proposition}
	Let $\ZB$ be a ball-basis in a measure space $(X,\ZM,\mu)$. For any ball $B\in \ZB$ and a function $f\in L^r(X)$ it holds the inequality
	\begin{equation}\label{d12}
	\langle f\rangle^*_{B}\le \INF_{B}\MM (f)\lesssim \langle f\rangle^*_{B}.
	\end{equation}
\end{proposition}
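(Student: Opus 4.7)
The statement closely parallels Proposition~\ref{P1}, with $\MM$ replacing $\MM_{\#}$ and $|f|^r$ replacing $|f-f_B|^r$, so my plan is to mimic that proof and adjust where the absence of the centering term $f_B$ actually simplifies matters.

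For the left inequality I would simply observe that if $y\in B$ and $A\in\ZB$ with $A\supseteq B$, then $y\in A$, hence $\MM f(y)\ge \langle f\rangle_A$. Taking the supremum over such $A$ gives $\MM f(y)\ge \langle f\rangle^*_B$ for every $y\in B$, and then taking essential infimum yields $\INF_B \MM(f)\ge \langle f\rangle^*_B$.

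For the right inequality, set $\lambda=\INF_B \MM(f)/2$. Then $\MM f(x)>\lambda$ for a.e.\ $x\in B$, so for each such $x$ there is a ball $B(x)\ni x$ with $\langle f\rangle_{B(x)}>\lambda$. Applying \lem{L1} to this full-measure subset of $B$ (which is contained in some ball by \lem{L2}) with covering $\{B(x)\}$, I extract a pairwise disjoint subsequence $\{B_k\}$ such that $\bigcup_k B_k^{*}\supset B$ up to a null set. I split into the same two cases as in the proof of Proposition~\ref{P1}. If some $B_k$ has $\mu(B_k)>\mu(B)$, then $\mu(B)\le 2\mu(B_k)$ and $B_k\cap B\neq\varnothing$, so property B4) gives $B\subset B_k^{*}$; comparing the averages on $B_k^{*}$ and $B_k$ via $\mu(B_k^{*})\le \ZK\mu(B_k)$ yields $\langle f\rangle^*_B\ge \langle f\rangle_{B_k^{*}}\gtrsim \langle f\rangle_{B_k}>\lambda$. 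Otherwise $\mu(B_k)\le\mu(B)$ for all $k$, hence each $B_k\subset B^{*}$, and then
\begin{align*}
\langle f\rangle^*_B\ge \langle f\rangle_{B^{*}}
&\ge \left(\frac{1}{\mu(B^{*})}\sum_k\int_{B_k}|f|^r\right)^{1/r}
=\left(\frac{1}{\mu(B^{*})}\sum_k\mu(B_k)\langle f\rangle_{B_k}^r\right)^{1/r}\\
&\ge \lambda\left(\frac{1}{\mu(B^{*})}\sum_k\mu(B_k)\right)^{1/r},
\end{align*}
and the lower bound $\sum_k\mu(B_k)\ge \mu(B)/\ZK$ coming from $\bigcup_kB_k^{*}\supset B$ together with \e{h13} closes the estimate.

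There is no real obstacle: the only subtle point is handling the case where some $B_k$ is larger than $B$, which is resolved by the hull property B4) exactly as in Proposition~\ref{P1}. The argument is cleaner here than in Proposition~\ref{P1} because there is no averaging constant $f_B$ to keep track of, so the cross-ball comparison of $\langle f\rangle_{B_k^{*}}$ with $\langle f\rangle_{B_k}$ is direct from the inclusion $B_k\subset B_k^{*}$ and $\mu(B_k^{*})\le \ZK\mu(B_k)$.
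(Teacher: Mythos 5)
Your proof is correct and follows essentially the same route as the paper: the left inequality by the definition of $\MM$, and the right one by choosing $\lambda=\INF_{B}\MM(f)/2$, covering $B$ (up to a null set) by balls $B(x)$ with $\langle f\rangle_{B(x)}>\lambda$, extracting disjoint $B_k$ via \lem{L1}, and splitting into the cases $\mu(B_k)>\mu(B)$ (where B4) gives $B\subset B_k^*$) and $\mu(B_k)\le\mu(B)$ for all $k$ (where $B_k\subset B^*$ and disjointness plus \e{h13} give the lower bound on $\sum_k\mu(B_k)$). The only differences are cosmetic: you handle the essential-infimum/null-set point explicitly, which the paper glosses over.
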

\begin{proof}
	The left hand side of \e{d12} is clear. To prove the right hand inequality we denote $\lambda=\inf_{y\in B}\MM f(y)/2$. For any $x\in B$ there exists a ball $B(x)\ni x$ such that $\langle f\rangle_{B(x)}>\lambda$.
	Applying \lem{L1}, we find sequence of pairwise disjoint balls $\{B_k\}\subset \{B(x):\, x\in B\}$ such that $\cup_kB_k^*\supset B$. If some ball $B_k$ satisfies $\mu(B_k)>\mu(B)$, then we have $B\subset B_k^*$ and then
	\begin{equation*}
	\langle f\rangle^*_{B}\ge \langle f\rangle_{B_k^*}\gtrsim \langle f\rangle_{B_k}>\lambda.
	\end{equation*}
	That implies \e{d12}. Hence we can suppose that $\mu(B_k)\le \mu(B)$ and so $B_k\subset B^*$  for any $k$. Therefore,
	\begin{align*}
	\langle f\rangle^*_{B}&\ge \langle f\rangle_{B^*}\ge \left(\frac{1}{\mu(B^*)}\sum_k\int_{B_k}|f|^r\right)^{1/r}\ge \lambda\left(\frac{1}{\mu(B^*)}\sum_k\mu(B_k)\right)^{1/r}\\
	&\gtrsim\lambda\left(\frac{1}{\mu(B^*)}\sum_k\mu(B_k^*)\right)^{1/r}\ge \lambda.
	\end{align*}
\end{proof}

\begin{corollary}\label{C8}
	Let $(X,\ZM,\mu)$ be a measure space equipped with a doubling ball-basis and let $T$ be a $\BO$ operator on $X$ satisfying the weak-$L^r$ bound, $1\le r<\infty$. Then for any function $f\in L^r(X)$ and ball $B$ such that $\supp f\subset B$, we have 
	\begin{equation}\label{a44}
	\mu\{x\in B:\,|Tf(x)|>t \cdot \MM f(x)|\}\lesssim c_T\cdot \exp (-c\cdot  t)\mu(B),\quad t>0,
	\end{equation}
	where $c_T>0$ is a constant depending on $T$.
\end{corollary}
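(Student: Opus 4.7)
The plan is to check that $|Tf|$ is strongly dominated by a $T$-dependent multiple of $\MM f$, apply \trm{T3} to obtain exponential concentration of $|Tf|$ about its median over $B$, and then pin down that median pointwise by $\MM f$ using the weak-$L^r$ bound together with the hypothesis $\supp f\subset B$.

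The previous proposition of \sect{S5} provides
$$\OSC_{B,\alpha}(|Tf|)\lesssim c(\alpha)\,\langle f\rangle^*_{B},\qquad c(\alpha)=\ZL(T)+(1-\alpha)^{-1/r}\|T\|_{L^r\to L^{r,\infty}},$$
and \e{d12} gives $\langle f\rangle^*_B\lesssim \INF_B(\MM f)$. Fixing an admissible $\alpha$ close to $1$ and absorbing $c(\alpha)$ into a single $T$-dependent constant $c_T$ yields the strong domination of $|Tf|$ by $c_T\cdot \MM f$. Then \trm{T3} delivers
$$\mu\bigl\{x\in B:\,\bigl||Tf(x)|-m_{|Tf|}(B)\bigr|>s\cdot c_T\MM f(x)\bigr\}\lesssim \exp(-c s)\mu(B),\quad s>0.$$

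Next I would estimate the median. Since $\supp f\subset B$, the weak-$L^r$ bound of $T$ gives
$$\mu\{x\in X:\,|Tf(x)|>\lambda\}\le\frac{\|T\|_{L^r\to L^{r,\infty}}^{r}}{\lambda^r}\int_B|f|^r=\frac{\|T\|^r\mu(B)}{\lambda^r}\langle f\rangle_B^{r},$$
so choosing $\lambda_0\sim\|T\|\langle f\rangle_B$ renders the right side at most $\mu(B)/2$; hence by the definition \e{a30} of the median, $m_{|Tf|}(B)\le\lambda_0\lesssim \|T\|\langle f\rangle_B$. Combining with \e{d12} and the a.e.\ inequality $\INF_B(\MM f)\le \MM f(x)$, this yields $m_{|Tf|}(B)\lesssim c_T\MM f(x)$ for a.e.\ $x\in B$.

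Finally, on the event $\{|Tf(x)|>t\MM f(x)\}$, once $t$ exceeds a fixed $T$-dependent threshold, the triangle inequality forces $\bigl||Tf(x)|-m_{|Tf|}(B)\bigr|>(t/2)\MM f(x)$, and the estimate from the first step with $s=t/(2c_T)$ then delivers the bound $\exp(-ct/(2c_T))\mu(B)$. For smaller $t$ the trivial bound $\mu(B)$ is absorbed into a $T$-dependent prefactor, producing \e{a44}. The main obstacle will be the pointwise median estimate: the weak-$L^r$ bound only controls $|Tf|$ averaged over $B$, and extracting a pointwise comparison to $\MM f$ at almost every point requires the equivalence \e{d12} between $\langle f\rangle^*_B$ and $\INF_B(\MM f)$; once this is in place, everything else is a routine combination of \trm{T3} with the preceding material of \sect{S5}.
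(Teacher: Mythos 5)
Your proposal is correct and follows essentially the same route as the paper: apply \trm{T3} with the oscillation bound \e{5} and the equivalence \e{d12} to get exponential concentration of $|Tf|$ about its median over $B$, then bound that median by $\lesssim \|T\|_{L^r\to L^{r,\infty}}\langle f\rangle_B\le c_T\,\MM f(x)$ using the weak-$L^r$ estimate and $\supp f\subset B$, and conclude by the triangle inequality. The only (cosmetic) difference is that you bound the median directly from its definition \e{a30} via the weak-type inequality, where the paper invokes \lem{L7} on the set $\{x\in B:\,|Tf(x)|\le\lambda\langle f\rangle_B\}$; these are the same estimate.
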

\begin{proof}
	Applying \trm{T3} along with \e{5} and \e{d12}, we will get a slight different inequality
	\begin{equation}\label{a42}
	\mu\{x\in B:\,|Tf(x)-m_{T(f)}(B)|>t \cdot \MM f(x)|\}\lesssim \exp (-c\cdot t)\mu(B),\quad t>0.
	\end{equation}
Then we denote 
	\begin{equation*}
	E=\{x\in B:\, |Tf(x)|\le \lambda\cdot  \langle f \rangle_B\},\quad \lambda=2\|T\|_{L^r\to L^{r,\infty}}^{r}.
	\end{equation*}
	From weak-$L^r$ estimate we get $\mu(E)>\mu(B)/2$. By \lem{L7} we have
	\begin{equation*}
	\INF_{E}(T(f))\le m_{T(f)}(B)\le \SUP_E(T(f)),
	\end{equation*}
	which implies 
	\begin{equation}\label{a43}
	|m_{T(f)}(B)|\le \lambda\cdot  \langle f \rangle_B\le 2\|T\|_{L^r\to L^{r,\infty}}^{r} \cdot \MM f(x),\quad x\in B.
	\end{equation}
	From \e{a42} and \e{a43} one can easily obtain \e{a44}.
\end{proof}
\cor{C8} implies the following good-$\lambda$ inequality.
\begin{corollary}\label{C9}
	Let $(X,\ZM,\mu)$ be a measure space with a doubling ball-basis $\ZB$ and let $T$ be a $\BO$ operator on $X$. Then, for any function $f\in L^r(X)$, $1\le r<\infty$, and for any $0<\varepsilon<\varepsilon_T$ we have 
	\begin{align}\label{a50}
	\mu\{x\in X&:\,|Tf(x)|>\lambda,\,\MM f(x)\le \varepsilon \lambda \}\\
	&\lesssim c_T\exp(-c/\varepsilon)\cdot \mu\{x\in X:\,|Tf(x)|>\lambda\},\, \lambda>0,\\
	\end{align}
	where $\varepsilon_T$ is a number depending on the operator $T$.
\end{corollary}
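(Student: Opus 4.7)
The plan is a standard good-$\lambda$ decomposition of the level set $F_\lambda=\{|Tf|>\lambda\}$ via \lem{L3}, followed by the per-ball application of the localized exponential estimate of \cor{C8} to the truncated function $f\cdot \ZI_{B_k^*}$.

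First I would fix $\lambda>0$, denote by $S$ the set on the left of \e{a50}, and assume $\mu(F_\lambda)<\mu(X)/4$; otherwise \e{a50} is either trivial or reducible via \lem{L9}. Using the exhausting sequence of balls from \lem{L2}, it suffices to bound $\mu(G\cap S)$ for an arbitrary ball $G$. Applying \lem{L3} to $F=F_\lambda$ and $F'=F_\lambda\cap G$ produces balls $\{B_k\}$ with $F_\lambda\cap G\subset \bigcup_k B_k$ a.e., $\sum_k \mu(B_k)\lesssim\mu(F_\lambda)$, and $\mu(B_k\cap F_\lambda)\le \mu(B_k)/2$. The problem thereby reduces to establishing the per-ball bound $\mu(B_k\cap S)\lesssim c_T\exp(-c/\varepsilon)\mu(B_k)$ and summing.

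Next I would fix such a ball $B_k$ meeting the set $\{\MM f\le \varepsilon\lambda\}$ (otherwise $B_k\cap S=\varnothing$) and pick a witness $x_*\in B_k$ with $\MM f(x_*)\le \varepsilon\lambda$. The elementary monotonicity $\langle f\rangle_A\le \MM f(x_*)$ for every ball $A\supseteq B_k$ gives $\langle f\rangle^*_{B_k}\le \varepsilon\lambda$. Decompose $f=f_1+f_2$ with $f_1=f\cdot \ZI_{B_k^*}$. The $\BO$ localization property yields $\OSC_{B_k}(Tf_2)\le \ZL(T)\varepsilon\lambda$, while the weak-$L^r$ bound on $T$ together with $\langle f\rangle_{B_k^*}\le \varepsilon\lambda$ gives
\begin{equation*}
\mu\{y\in B_k:|Tf_1(y)|>K\varepsilon\lambda\}\lesssim K^{-r}\mu(B_k),\quad K>0.
\end{equation*}
Choosing $K$ large enough that the right side is strictly less than $\mu(B_k)/4$, the balance $\mu(B_k\setminus F_\lambda)\ge \mu(B_k)/2$ produces a point $y_0\in B_k\setminus F_\lambda$ with $|Tf_1(y_0)|\le K\varepsilon\lambda$; propagating via the oscillation bound yields $|Tf_2(x)|\le \lambda\bigl(1+(K+\ZL(T))\varepsilon\bigr)$ for every $x\in B_k$.

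Finally, restricting $\varepsilon<\varepsilon_T:=1/(4(K+\ZL(T)))$ makes $|Tf_2|\le \tfrac54\lambda$ on $B_k$, so any $x\in B_k$ with $|Tf(x)|>2\lambda$ and $\MM f(x)\le \varepsilon\lambda$ satisfies $|Tf_1(x)|\ge \tfrac34\lambda\ge (3/(4\varepsilon))\MM f_1(x)$, where we use $\MM f_1\le \MM f$. Applying \cor{C8} to $f_1$ on the ball $B_k^*$ (which contains $\supp f_1$) then delivers
\begin{equation*}
\mu\bigl(B_k\cap\{|Tf|>2\lambda,\MM f\le \varepsilon\lambda\}\bigr)\lesssim c_T\exp(-c/\varepsilon)\mu(B_k),
\end{equation*}
which on summation gives the analogue of \e{a50} with $|Tf|>2\lambda$ on the left. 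A rescaling $\lambda\mapsto \lambda/2$ with a corresponding adjustment of $\varepsilon_T$ and of the admissible constants then yields the form claimed in \e{a50}. The main obstacle is the selection of $y_0$: both the balanced covering (which supplies $\mu(B_k\setminus F_\lambda)\ge \mu(B_k)/2$) and the weak-$L^r$ estimate for $T$ must cooperate, and the quantitative restriction $\varepsilon<\varepsilon_T$ encodes exactly the room needed to absorb the $(K+\ZL(T))\varepsilon\lambda$ errors generated at this step.
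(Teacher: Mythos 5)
Your proposal is correct and takes essentially the same route as the paper's own proof: the balanced covering of $F_\lambda$ from \lem{L3}, the truncation $f\cdot\ZI_{B_k^*}$ with the $\BO$ localization bound for the tail part, a weak-$L^r$ selection of a point of $B_k\setminus F_\lambda$ where the local part is small, and a per-ball application of \cor{C8} to the truncated function, summed over $k$ and exhausted over balls $G$ (with the case $\mu(F_\lambda)\ge\mu(X)/4$ handled through \lem{L9} and \cor{C8} with $B=X$, which you only sketch). The one caveat, shared with the paper itself, is that this argument yields the good-$\lambda$ inequality with $|Tf|>2\lambda$ on the left and $|Tf|>\lambda$ on the right; your final rescaling $\lambda\mapsto\lambda/2$ does not literally produce the same-$\lambda$ form printed in \e{a50} (the right-hand side becomes $\mu\{|Tf|>\lambda/2\}$), but the paper's proof establishes exactly the same $2\lambda$ version, so this is a defect of the statement as printed rather than of your argument.
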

\begin{proof}
	We can suppose that the set
	\begin{equation}\label{aa29}
	F_{\lambda}=\{x\in X:\,|Tf(x)|>\lambda\},\quad \lambda>0.
	\end{equation}
	has a finite measure. We have either $\mu(F_\lambda)\ge \mu(X)/4$ or $\mu(F_\lambda)< \mu(X)/4$. In the first case we get $\mu(X)<\infty$ and so by \lem{L9} we have $X\in \ZB$. Applying \cor{C8} with $B=X$, we obtain
	\begin{align}
	\mu\{x\in X:\, &|Tf(x)|>2\lambda,\, \MM f(x)\le \varepsilon \lambda\}\\
	&\le \mu\{x\in X:\,|Tf(x)|>\MM f(x)/\varepsilon \}\\
	&\lesssim c_T\exp(-c/\varepsilon)\mu(X)\\
	&\lesssim c_T\exp(-c/\varepsilon)\cdot \mu\{x\in X:\,|Tf(x)|>\lambda\}.
	\end{align} 
	Now let us suppose that $\mu(F_\lambda)< \mu(X)/4$ and let $G$ be an arbitrary ball. Apply \lem{L3} to $F=F_\lambda$ and $F'=G\cap F_\lambda$. We find balls $B_k$ satisfying conditions \e{a23}, \e{a24} and \e{a25}. We claim that
	\begin{equation}\label{a61}
	\mu\{x\in B_k:\,|Tf(x)|>2\lambda,\, \MM f(x)\le \varepsilon \lambda\}\le c_T\exp(-c/\varepsilon)\cdot \mu(B_k).
	\end{equation}
	We can suppose that $\MM f(\xi_k)\le \varepsilon \lambda$ for some $\xi_k\in B_k$, since otherwise \e{a61} is trivial. This implies $\langle f\rangle_{B_k}^*\le \lambda\varepsilon$.
	Given ball $B_k$ consider the functions
	\begin{equation*}
	f_k=f\cdot \ZI_{B_k^*},\quad g_k=f-f_k=f\cdot \ZI_{X\setminus B_k^*}.
	\end{equation*}
	From \cor{C8} it follows that
	\begin{align}\label{aa5}
	\mu\{x\in B_k:\, &|Tf_k(x)|>\lambda/3,\, \MM f(x)\le \varepsilon \lambda\}\\
	&\le \mu\{x\in B_k^*:\, |Tf_k(x)|>\lambda/3,\, \MM f_k(x)\le \varepsilon \lambda\}\\
	&\le \mu\{x\in B_k^*:\,|Tf_k(x)|>\MM f_k(x)/\varepsilon \}\lesssim  c_T\exp(-c/\varepsilon)\cdot\mu(B_k).
	\end{align} 
	Since $T$ is a $\BO$ operator, for $0<\varepsilon<\ZL(T)/3$ we have
	\begin{equation}\label{a62}
	\OSC_{B_k}(T(g_k))\le \ZL(T)\cdot \langle f\rangle_{B_k}^*\le  \lambda\varepsilon \ZL(T)<\lambda/3.
	\end{equation}
	Applying weak-$L^r$ inequality with $t=3\lambda\varepsilon\|T\|_{L^r\to L^{r,\infty}}$ we have
	\begin{align*}
	\mu\{x\in B_k:\, |Tf_k(x)|>t\}&\le \frac{\|T\|_{L^r\to L^{r,\infty}}}{t} \int_{B_k^*}|f|\\
	&\le \frac{\|T\|_{L^r\to L^{r,\infty}}}{t} \langle f\rangle_{B_k}^*\cdot \mu(B_k^*) \\
	&\lesssim  \frac{\lambda\varepsilon\|T\|_{L^r\to L^{r,\infty}}}{t}\cdot \mu(B_k)<\frac{\mu(B_k)}{2}.
	\end{align*}
	Combining this bound with \e{a25}, we find a point $\eta_k\in B_k\setminus F_\lambda$ such that $|Tf_k(\eta_k)|\le t$ and $|Tf(\eta_k)|<\lambda $. Hence, by the additivity of $T$ for $0<\varepsilon<(9\|T\|_{L^r\to L^{r,\infty}})^{-1}$ we get \begin{equation*}
	Tg_k(\eta_k)\le |Tf_k(\eta_k)|+|Tf(\eta_k)|\le t+\lambda< 4\lambda/3.
	\end{equation*}
	Thus, applying \e{a62}, we get
	\begin{equation*}
	|Tg_k(x)|\le |Tg_k(x)-Tg_k(\eta_k)|+|Tg_k(\eta_k)|\le 5\lambda/3\text { for all }x\in B_k
	\end{equation*}
	and so by \e{aa5} we conclude
	\begin{align*}
	\mu\{x\in B_k:\,|Tf(x)|>2\lambda,\,& \MM f(x)\le \varepsilon \lambda\}\\
	&\le \mu\{x\in B_k:\,|Tf_k(x)|>\lambda/3,\, \MM f(x)\le \varepsilon \lambda\}\\
	&\lesssim  c_T\exp(-c/\varepsilon)\cdot\mu(B_k).
	\end{align*}
	Once we have \e{a61}, applying \e{a23} and \e{a24}, we obtain the bound
	\begin{align*}
	\mu\{x\in G&:\,|Tf(x)|>2\lambda,\,\MM f(x)\le \varepsilon \lambda \}\\
	&\le \sum_k\mu\{x\in B_k:\,|Tf(x)|>2\lambda,\, \MM f(x)\le \varepsilon \lambda\}\\
	&\lesssim c_T\exp(-c/\varepsilon)\cdot \sum_k\mu(B_k)\\
	&\lesssim  c_T\exp(-c/\varepsilon)\cdot \mu(F_\lambda),
	\end{align*}
	valid for an arbitrary ball $G$. Choosing  $G$ to be one of the balls $G_n$ in \lem{L3}, and letting $n$ to go to infinity, we will get \e{a50}.
\end{proof}Note that exponential inequality \e{a44} for the classical Calder\'on-Zygmund operators on $\ZR^n$ was proved in \cite{Kar2}.  For the partial sums operators in Walsh and rearranged Haar systems was established in \cite{Kar3}. The Calder\'on-Zygmund operator version of inequality \e{a50} was proved by Buckley \cite{Buck}. The Hilbert transform case of this inequality goes back to the work of Hunt \cite{Hunt2}. 

Now suppose that we are given a family functions $\Phi=\{\phi_a\in L^\infty (\ZR^n):\,\|\phi_a\|_{\infty}\le 1\}_{a\in A}$ and a Calder\'on-Zygmund operator $T$ acting from $L^r(\ZR^n)$ to $L^{r,\infty}(\ZR^n)$. Let us consider the Carleson type maximal modulated singular operator defined by
\begin{equation}\label{a51}
T^\Phi f(x)=\sup_{a\in A}\left|T(\phi_a\cdot f)(x)\right|.	
\end{equation} 
It was proved in \cite{Kar2} that $T^\Phi$ is a $\BO$ operator. Thus, form \cor{C8} we obtain the following. 
\begin{corollary}
	Let $T^\Phi$ be an operator of the form \e{a51} acting from $L^r(\ZR^n)$ into $L^{r,\infty}(\ZR^n)$ and let $\MM$ be the maximal function on $\ZR^n$. Then for any function $f\in L^r(\ZR^n)$ and ball $B$ there hold the inequalities
	\begin{equation}\label{a45}
	\mu\{x\in B:\,|T^\Phi f(x)|>\lambda \cdot \MM f(x)|\}\le c_T\cdot \exp (- c\cdot \lambda)\mu(B),\quad \lambda>0,
	\end{equation}
	and
	\begin{align}\label{a52}
	\mu\{x\in X&:\,|T^\Phi f(x)|>\lambda,\,\MM f(x)\le \varepsilon \lambda \}\\
	&\lesssim c_T\exp(-c/\varepsilon)\cdot \mu\{x\in X:\,|T^\Phi f(x)|>\lambda\},\, \lambda>0,
	\end{align}
	where $c_T>0$ is a constant depending on $T$.
\end{corollary}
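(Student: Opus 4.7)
The plan is to reduce both claims to direct applications of Corollaries \ref{C8} and \ref{C9}. The necessary inputs are: (i) the Euclidean ball basis on $\ZR^n$ is doubling (one may take $\eta = 2^n$ by comparing concentric balls of radii $r$ and $2r$), so the ambient hypotheses of Section \ref{S5} are met; (ii) as noted just before the statement and proved in \cite{Kar2}, the maximal modulated operator $T^\Phi$ defined by \eqref{a51} belongs to $\BO_\ZB$; (iii) the standing hypothesis that $T^\Phi$ maps $L^r(\ZR^n)$ into $L^{r,\infty}(\ZR^n)$ supplies the weak-$L^r$ bound required by both corollaries.

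As a preliminary, I would verify that $T^\Phi$ is subadditive in the sense of the definition preceding \eqref{d7}. Positive homogeneity is immediate, since $T^\Phi(cf)(x) = \sup_{a}|T(\phi_a\cdot cf)(x)| = |c|\,T^\Phi f(x)$. For the triangle inequality one writes $|T(\phi_a(f+g))| \le |T(\phi_a f)| + |T(\phi_a g)| \le T^\Phi f + T^\Phi g$ for each $a$, and then takes the supremum in $a \in A$.

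With these checks in place, \eqref{a52} is literally the statement of Corollary \ref{C9} applied to $T = T^\Phi$, so nothing more is needed there. For \eqref{a45}, one applies Corollary \ref{C8} to $T = T^\Phi$; the constants $c_T$ and $c$ absorb the dependence on $\ZL(T^\Phi)$ and $\|T^\Phi\|_{L^r\to L^{r,\infty}}$.

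The one subtlety, and the step I expect to require actual care, is that Corollary \ref{C8} is stated for $f$ with $\supp f \subset B$, while \eqref{a45} is asserted for arbitrary $f \in L^r(\ZR^n)$. To reconcile this I would use the same decomposition strategy employed in the proof of Corollary \ref{C9}: split $f = f\cdot\ZI_{B^*} + f\cdot\ZI_{\ZR^n\setminus B^*}$. Corollary \ref{C8} applied on a slightly enlarged ball handles the near piece $f\cdot\ZI_{B^*}$. For the far piece, the localization property \eqref{d7} bounds $\OSC_B\bigl(T^\Phi(f\cdot\ZI_{\ZR^n\setminus B^*})\bigr)$ by $\ZL(T^\Phi)\cdot \langle f\rangle_B^*$, and the right-hand inequality of \eqref{d12} gives $\langle f\rangle_B^* \lesssim \INF_B \MM f$. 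Combined with a weak-$L^r$ estimate to fix a point in $B$ at which $T^\Phi(f\cdot\ZI_{\ZR^n\setminus B^*})$ is controlled pointwise by $\MM f$, the contribution of the far piece is absorbed into the exponential tail on the right-hand side of \eqref{a45}. Once this bookkeeping is carried out, both inequalities follow, and the dependence on $T$ is packaged into the single constant $c_T$.
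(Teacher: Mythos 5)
Your overall reduction to Corollaries \ref{C8} and \ref{C9} --- after checking that the Euclidean balls form a doubling ball-basis, that $T^\Phi$ is subadditive, that $T^\Phi\in\BO$ by \cite{Kar2}, and that the weak-$L^r$ bound is a standing hypothesis --- is exactly the paper's argument, and for \e{a52} nothing more is needed: it is \cor{C9} applied to $T=T^\Phi$ (equivalently, the paper's remark that \e{a45} implies \e{a52} via the proof of \cor{C9}).

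The genuine problem is the extra argument you add in order to get \e{a45} for arbitrary $f\in L^r(\ZR^n)$ rather than for $f$ supported in $B$. Your plan for the far piece $f_2=f\cdot\ZI_{\ZR^n\setminus B^*}$ cannot be completed: the localization property \e{d7} controls only $\OSC_B(T^\Phi f_2)$, i.e.\ it says $T^\Phi f_2$ is essentially constant on $B$ up to $\ZL(T^\Phi)\langle f\rangle_B^*$, but it says nothing about the size of that constant, and the weak-$L^r$ bound does not furnish a point of $B$ at which $|T^\Phi f_2|\lesssim\MM f$. (In the proof of \cor{C9} such a point $\eta_k$ exists only because the balls $B_k$ cover $F_\lambda$ with $\mu(B_k\cap F_\lambda)\le\mu(B_k)/2$, so half of $B_k$ lies where $|Tf|\le\lambda$; for the arbitrary ball $B$ in \e{a45} there is no analogous information.) Moreover, no bookkeeping can repair this step, because without the support restriction the inequality is false: take $n=1$, $r=1$, $\Phi=\{1\}$, $T$ the Hilbert transform, $B=[0,1]$ and $f=\sum_{k=2}^{N}(2^k/k)\,\ZI_{[2^k,2^k+1]}$; then on $B$ one has $\MM f\lesssim 1$ while $|Tf|\gtrsim\log N$, so the left-hand side of \e{a45} equals $\mu(B)$ for $\lambda\sim\log N$, and letting $N\to\infty$ contradicts any bound $c_T\exp(-c\lambda)\mu(B)$ with constants independent of $f$ (a similar lacunary construction works for every $r\ge1$). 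So \e{a45} has to be read with the hypothesis $\supp f\subset B$ of \cor{C8}, which is what the paper's one-line derivation from \cor{C8} presupposes; note that for the Carleson operator on $\ZT$ this is automatic, since by \lem{L9} one may take $B=\ZT=X$. Your instinct that the mismatch between the statement and \cor{C8} needed attention was sound, but the correct resolution is to keep the support restriction, not to decompose $f$ and absorb the far piece.
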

As we saw above \e{a45} implies \e{a52}. Note that inequality \e{a52} with a rate of decay $\varepsilon^{cr}$ instead of $\exp(-c/\varepsilon)$ was proved by Grafakos, Martell and Soria in \cite{GMS}.
The classical example of maximal modulated singular operators is the Carleson operator 
\begin{equation*}
\ZC f(x)=\sup_{a\in \ZR}\left|\text{p.v.}\int_\ZT\frac{e^{2\pi iat}}{2\tan(x-t)/2}\,f(t)\,dt\right|.
\end{equation*} 
It is well known that $\ZC$ is bounded on $L^r$ for all $1<r<\infty$  (\cite{Car}, \cite{Hunt}). So the inequalities \e{a45} and \e{a52} hold also for the Carleson operator. Namely,
\begin{corollary}
If $\ZC$ is the Carleson operator and  $\MM$ is the maximal function on unit circle $\ZT$, then for any function $f\in L^r(\ZT)$ we have
	\begin{equation}\label{a55}
	|\{x\in \ZT:\,|\ZC f(x)|>\lambda \cdot \MM f(x)|\}\le c_r\cdot \exp (- c\cdot \lambda),\quad \lambda>0,
	\end{equation}
	and
	\begin{align}\label{a56}
	\mu\{x\in \ZT&:\,|\ZC f(x)|>\lambda,\,\MM f(x)\le \varepsilon \lambda \}\\
	&\le c_r\exp(-c/\varepsilon)\cdot \mu\{x\in\ZT:\,|Tf(x)|>\lambda\},\, \lambda>0.
	\end{align}
\end{corollary}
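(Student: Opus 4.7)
The plan is to realize $\ZC$ as a specific instance of the maximal modulated singular operator \e{a51} and then invoke the two preceding results, \cor{C8} and \cor{C9}, rather than the preceding corollary itself (which is stated on $\ZR^n$). The point is that those two corollaries only require a doubling ball-basis together with a $\BO$ operator of weak type $(r,r)$, so they apply equally well on $\ZT$.

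First, I equip $\ZT$ with Lebesgue measure and the ball-basis $\ZB$ of closed arcs. A direct check shows that $\ZB$ satisfies B1)--B4) and the doubling condition \e{h73} with absolute constants; since $\mu(\ZT)<\infty$, \lem{L9} guarantees $\ZT\in\ZB$. Next, I put $\ZC$ into the form \e{a51} by setting $\phi_a(t)=e^{2\pi iat}$ (so $\|\phi_a\|_\infty=1$) and taking $T$ to be the Hilbert transform on $\ZT$, i.e. principal-value convolution with $(2\tan((x-t)/2))^{-1}$. Then $\ZC f(x)=\sup_{a\in\ZR}|T(\phi_a\cdot f)(x)|$ exactly. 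By the Carleson--Hunt theorem \cite{Car},\cite{Hunt}, $\ZC$ is bounded on $L^r(\ZT)$ for $1<r<\infty$, hence in particular of weak type $(r,r)$.

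Next I invoke the fact, indicated after \e{a51} and proved in \cite{Kar2} on $\ZR^n$, that every maximal modulated Calder\'on--Zygmund operator belongs to $\BO$. That argument uses only the size and smoothness of the Calder\'on--Zygmund kernel together with B1)--B4) and the doubling property of the ball-basis, so it transfers verbatim to $(\ZT,\ZB)$ with the Hilbert kernel $(2\tan((x-t)/2))^{-1}$, which has the standard Calder\'on--Zygmund estimates on arcs.

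Once $\ZC\in\BO_\ZB$ and the weak-$L^r$ bound are in hand, inequality \e{a55} is obtained by applying \cor{C8} with $B=\ZT\in\ZB$ (the assumption $\supp f\subset B$ is automatic), and inequality \e{a56} is obtained by applying \cor{C9} with $T=\ZC$. The only step that is not immediate bookkeeping is the transfer of the $\BO$ property from $\ZR^n$ to $\ZT$; I expect this to be the main (and essentially only) obstacle, and a routine adaptation of the proof in \cite{Kar2} should handle it.
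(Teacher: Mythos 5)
Your proposal is correct and follows essentially the paper's own route: the paper likewise treats $\ZC$ as a maximal modulated singular operator of the form \e{a51} (with $\phi_a(t)=e^{2\pi iat}$ and the Hilbert kernel), uses the Carleson--Hunt $L^r$ bounds for the weak-type hypothesis, and cites the $\BO$ property so that \cor{C8} and \cor{C9} give \e{a55} and \e{a56}. Your only addition is the explicit (and welcome) bookkeeping that the arcs form a doubling ball-basis on $\ZT$ with $\ZT\in\ZB$ by \lem{L9}, which the paper leaves implicit when passing from $\ZR^n$ to the circle.
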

In the particular case of $f\in L^\infty(\ZT)$ we will have the inequality
\begin{equation*}
\mu\{x\in \ZT:\,|\ZC f(x)|>t\}\lesssim \exp (-c\cdot  t/\|f\|_\infty),\quad t>0,
\end{equation*}
due to Sj\"{o}lin \cite{Sjo}. Estimates analogous to \e{a55}, \e{a56}  are also valid for the Walsh-Carleson operator. 

\bibliographystyle{plain}

\begin{bibdiv}
	\begin{biblist}
		\bib{Buck}{article}{
			author={Buckley, Stephen M.},
			title={Estimates for operator norms on weighted spaces and reverse Jensen
				inequalities},
			journal={Trans. Amer. Math. Soc.},
			volume={340},
			date={1993},
			number={1},
			pages={253--272},
			issn={0002-9947},
			review={\MR{1124164}},
			doi={10.2307/2154555},
		}
		\bib{BuGu}{article}{
			author={Burkholder, D. L.},
			author={Gundy, R. F.},
			title={Distribution function inequalities for the area integral},
			note={Collection of articles honoring the completion by Antoni Zygmund of
				50 years of scientific activity, VI},
			journal={Studia Math.},
			volume={44},
			date={1972},
			pages={527--544},
			issn={0039-3223},
			review={\MR{0340557}},
			doi={10.4064/sm-44-6-527-544},
		}
		\bib{Car}{article}{
			author={Carleson, Lennart},
			title={On convergence and growth of partial sums of Fourier series},
			journal={Acta Math.},
			volume={116},
			date={1966},
			pages={135--157},
			issn={0001-5962},
			review={\MR{0199631}},
			doi={10.1007/BF02392815},
		}
		\bib{CoFe}{article}{
			author={Coifman, R. R.},
			author={Fefferman, C.},
			title={Weighted norm inequalities for maximal functions and singular
				integrals},
			journal={Studia Math.},
			volume={51},
			date={1974},
			pages={241--250},
			issn={0039-3223},
			review={\MR{0358205}},
			doi={10.4064/sm-51-3-241-250},
		}
		\bib{Co}{article}{
			author={Coifman, R. R.},
			title={Distribution function inequalities for singular integrals},
			journal={Proc. Nat. Acad. Sci. U.S.A.},
			volume={69},
			date={1972},
			pages={2838--2839},
			issn={0027-8424},
			review={\MR{0303226}},
			doi={10.1073/pnas.69.10.2838},
		}
		\bib{FeSt}{article}{
			author={Fefferman, C.},
			author={Stein, E. M.},
			title={$H^{p}$ spaces of several variables},
			journal={Acta Math.},
			volume={129},
			date={1972},
			number={3-4},
			pages={137--193},
			issn={0001-5962},
			review={\MR{0447953}},
			doi={10.1007/BF02392215},
		}
		\bib{GMS}{article}{
			author={Grafakos, Loukas},
			author={Martell, Jos\'{e} Mar\'{i}a},
			author={Soria, Fernando},
			title={Weighted norm inequalities for maximally modulated singular
				integral operators},
			journal={Math. Ann.},
			volume={331},
			date={2005},
			number={2},
			pages={359--394},
			issn={0025-5831},
			review={\MR{2115460}},
			doi={10.1007/s00208-004-0586-2},
		}
		
		\bib{Hunt2}{article}{
			author={Hunt, Richard A.},
			title={An estimate of the conjugate function},
			note={Collection of articles honoring the completion by Antoni Zygmund of
				50 years of scientific activity, IV},
			journal={Studia Math.},
			volume={44},
			date={1972},
			pages={371--377},
			issn={0039-3223},
			review={\MR{0338667}},
			doi={10.4064/sm-44-4-371-377},
		}
		\bib{Hunt}{article}{
			author={Hunt, Richard A.},
			title={On the convergence of Fourier series},
			conference={
				title={Orthogonal Expansions and their Continuous Analogues},
				address={Proc. Conf., Edwardsville, Ill.},
				date={1967},
			},
			book={
				publisher={Southern Illinois Univ. Press, Carbondale, Ill.},
			},
			date={1968},
			pages={235--255},
			review={\MR{0238019}},
		}
		
		\bib{JaTo}{article}{
			author={Jawerth, B.},
			author={Torchinsky, A.},
			title={Local sharp maximal functions},
			journal={J. Approx. Theory},
			volume={43},
			date={1985},
			number={3},
			pages={231--270},
			issn={0021-9045},
			review={\MR{779906}},
			doi={10.1016/0021-9045(85)90102-9},
		}
		\bib{Kar1}{article}{
			author={Karagulyan, Grigori A.},
			title={{An abstract theory of singular operators}},
			journal={to appear in Trans. Amer. Math. Soc., },
			doi={10.1090/tran/7722, available at https://arxiv.org/abs/1611.03808}
		}
		
		\bib{Kar2}{article}{
			author={Karagulyan, Grigori A.},
			title={Exponential estimates for the Calder\'{o}n-Zygmund operator and
				related problems of Fourier series},
			language={Russian, with Russian summary},
			journal={Mat. Zametki},
			volume={71},
			date={2002},
			number={3},
			pages={398--411},
			issn={0025-567X},
			translation={
				journal={Math. Notes},
				volume={71},
				date={2002},
				number={3-4},
				pages={362--373},
				issn={0001-4346},
			},
			review={\MR{1913610}},
			doi={10.1023/A:1014850924850},
		}
		\bib{Kar3}{article}{
			author={Karagulyan, Grigori A.},
			title={Exponential estimates for partial sums of Fourier series in the
				Walsh system and the rearranged Haar system},
			language={Russian, with English and Russian summaries},
			journal={Izv. Nats. Akad. Nauk Armenii Mat.},
			volume={36},
			date={2001},
			number={5},
			pages={23--34 (2002)},
			issn={0002-3043},
			translation={
				journal={J. Contemp. Math. Anal.},
				volume={36},
				date={2001},
				number={5},
				pages={19--30 (2002)},
				issn={1068-3623},
			},
			review={\MR{1964580}},
		}
		\bib{Ler}{article}{
			author={Lerner, Andrei K.},
			title={A pointwise estimate for the local sharp maximal function with
				applications to singular integrals},
			journal={Bull. Lond. Math. Soc.},
			volume={42},
			date={2010},
			number={5},
			pages={843--856},
			issn={0024-6093},
			review={\MR{2721744}},
			doi={10.1112/blms/bdq042},
		}
		\bib{Per}{article}{
			author={Ortiz-Caraballo, Carmen},
			author={P\'{e}rez, Carlos},
			author={Rela, Ezequiel},
			title={Exponential decay estimates for singular integral operators},
			journal={Math. Ann.},
			volume={357},
			date={2013},
			number={4},
			pages={1217--1243},
			issn={0025-5831},
			review={\MR{3124931}},
			doi={10.1007/s00208-013-0940-3},
		}
		\bib{Sjo}{article}{
			author={Sj\"{o}lin, Per},
			title={Convergence almost everywhere of certain singular integrals and
				multiple Fourier series},
			journal={Ark. Mat.},
			volume={9},
			date={1971},
			pages={65--90},
			issn={0004-2080},
			review={\MR{0336222}},
			doi={10.1007/BF02383638},
		}
		\bib{St}{book}{
			author={Stein, Elias M.},
			title={Harmonic analysis: real-variable methods, orthogonality, and
				oscillatory integrals},
			series={Princeton Mathematical Series},
			volume={43},
			note={With the assistance of Timothy S. Murphy;
				Monographs in Harmonic Analysis, III},
			publisher={Princeton University Press, Princeton, NJ},
			date={1993},
			pages={xiv+695},
			isbn={0-691-03216-5},
			review={\MR{1232192}},
		}
		
	\end{biblist}
\end{bibdiv}

\end{document}